\documentclass[a4paper,UKenglish,cleveref, autoref, thm-restate]{lipics-v2021}

\pdfoutput=1 
\hideLIPIcs  

\title{Lions and Contamination: Trees and General Graphs} 

\author{Dohoon Kim}{School of Undergraduate Studies, DGIST, South Korea}{cdjs1432@dgist.ac.kr}{https://orcid.org/0009-0003-7565-9045}{}

\author{Eungyu Woo}{Department of EECS, DGIST, South Korea}{ekwoo@dgist.ac.kr}{https://orcid.org/0009-0000-3621-5175}{}

\author{Donghoon Shin}{Department of EECS, DGIST, South Korea}{dshin@dgist.ac.kr}{https://orcid.org/0009-0004-7088-3846}{}

\authorrunning{D. Kim, E. Woo, and D. Shin} 

\Copyright{Dohoon Kim, Eungyu Woo, and Donghoon Shin} 

\ccsdesc[500]{Theory of computation~Parameterized complexity and exact algorithms}
\ccsdesc[500]{Theory of computation~Graph algorithms analysis}

\keywords{Pursuit-evasion games, Safe path planning, Parameterized Complexity} 

\category{} 

\relatedversion{} 

\acknowledgements{We wish to thank our reviewers and colleagues for their insightful feedback.}

\nolinenumbers 

\EventEditors{}
\EventNoEds{0}
\EventLongTitle{34th Annual European Symposium on Algorithms (ESA 2026)}
\EventShortTitle{ESA 2026}
\EventAcronym{ESA}
\EventYear{2026}
\EventDate{August 31-September 4, 2026}
\EventLocation{L'Aquila, Italy}
\EventLogo{}
\SeriesVolume{}
\ArticleNo{}

\begin{document}

\maketitle

\begin{abstract}
This paper investigates a special variant of a pursuit-evasion game called lions and contamination. In a graph where all vertices are initially contaminated, a set of lions traverses the graph, clearing the contamination from every vertex they visit. However, the contamination simultaneously spreads to any adjacent vertex not occupied by a lion. We analyze the relationships among the lion number $\mathcal{L}(G)$, monotone lion number $\mathcal{L}^m(G)$, and the graph's pathwidth $\operatorname{pw}(G)$. Our main results are as follows: 
(a) We prove a monotonicity property: for any graph $G$ and its isometric subgraph $H$, $\mathcal{L}(H)\le \mathcal{L}(G)$.
(b) For trees $T$, we show that the lion number is tightly characterized by pathwidth, satisfying $\operatorname{pw}(T)\le \mathcal{L}(T)\le \operatorname{pw}(T)+1$.
(c) We provide a counterexample showing that the monotonicity property fails for arbitrary subgraphs.
(d) We show that, in contrast to the tree case, pathwidth does not yield a general lower bound on $\mathcal{L}(G)$ for arbitrary graphs.
(e) For any connected graph $G$, we prove the general upper bound $\mathcal{L}(G)\le \operatorname{pw}(G)+1$.
(f) For the monotone variant, we establish the general lower bound $\operatorname{pw}(G)\le \mathcal{L}^m(G)$.
(g) Conversely, we show that $\mathcal{L}^m(G)\le 2\operatorname{pw}(G)+2$ holds for all connected graphs, which is best possible up to a small additive constant.
\end{abstract}

\section{Introduction}
Pursuit-evasion on graphs is often studied through search formulations. In the \emph{lions and contamination} problem, we are given a connected graph \(G=(V_G,E_G)\) and place a set of lions on vertices at time \(t=0\). The occupied vertices are cleared, and all other vertices are contaminated. At each step \(t\ge 1\), every lion either stays or moves to an adjacent vertex, clearing its destination. At the same time, contamination spreads along edges and contaminates any vertex not occupied by a lion that has a contaminated neighbor connected via an edge not used by a lion in that step. The goal is to clear the entire graph. This is an offline pursuit-evasion model, and it differs from both classical graph searching and cop-and-robber games~\cite{Berger09,Dumitrescu07}.

Previous work on lions and contamination has focused mainly on grids and geometric settings. Dumitrescu et al.\ introduced offline variants of the lion-and-man problem and the contamination viewpoint \cite{Dumitrescu07}. Brass et al.\ and Berger et al.\ proved sharp bounds for grid graphs using isoperimetric and separator arguments, including asymptotic bounds in higher-dimensional grids \cite{Berger09,Brass07}. Berger et al.\ also observed that recontamination can be useful, so monotone paths are not automatic in this model \cite{Berger09}. More recently, Bertschinger et al.\ studied monotone and related clearing variants on general graphs and further highlighted that monotonicity is a genuine restriction \cite{Bertschinger23}.

We study the structural theory of lions and contamination on trees and general graphs through its connection to pathwidth. Pathwidth is a standard graph width parameter with central roles in graph searching, layout problems, and structural graph theory \cite{Bienstock91,Kinnersley92,RobertsonI}. Since pathwidth captures the complexity of many search processes, it is natural to ask whether it also controls the number of lions.

We consider the lion number \(\mathcal{L}(G)\) and the monotone lion number \(\mathcal{L}^m(G)\), where monotone means that no cleared vertex is ever recontaminated during the clearing process. Our results are as follows. In \cref{sec:tree}, we prove that \(\mathcal{L}(H)\le \mathcal{L}(G)\) whenever \(H\) is an isometric subgraph of \(G\) and show that every tree \(T\) satisfies \(\operatorname{pw}(T)\le \mathcal{L}(T)\le \operatorname{pw}(T)+1\).
In \cref{sec:subgraph_counterexample}, we give a counterexample demonstrating that the subgraph might require more lions, answering an open question posed in previous work \cite{adams2022lions}.
In \cref{sec:general_graph}, we establish the general upper bound \(\mathcal{L}(G)\le \operatorname{pw}(G)+1\) for every connected graph \(G\), and we demonstrate that pathwidth cannot provide a general lower bound for \(\mathcal{L}(G)\) using our counterexample. As a side result, we also show that our bounds and counterexample can be extended to the zero-visibility cops and robber game, which is detailed in \cref{sec:appendix_zero}. In \cref{sec:monotone_clearing}, we establish upper and lower bounds relating \(\mathcal{L}^m(G)\) and \(\operatorname{pw}(G)\). 
We show that these bounds are best possible up to a small constant, establishing a constant-factor equivalence between $\mathcal{L}^m(G)$ and $\operatorname{pw}(G)$ on connected graphs.

Together, these results give a pathwidth-based framework for lions and contamination on general graphs. Pathwidth gives a tight characterization on trees, a universal upper bound in general, and a constant-factor characterization for monotone clearing.

\section{Preliminaries}\label{sec:preliminaries}
For a formal definition of the problem, we first define the open neighborhood and boundary of the vertex set. The open \emph{neighborhood} of a set $S\subseteq V_G$ is defined as $N(S) := \{w \in V_G \setminus S \mid \exists v \in S \text{ such that } \{w, v\} \in E_G\}$. For simplicity, if we use the notation $N(v)$ for a single vertex $v$, $N(v) := \{w \in V_G \setminus \{v\} \mid \{w, v\} \in E_G\}$. The \emph{boundary} of a set $S\subseteq V_G$ is defined as $\partial(S) := \{w \in S \mid \exists v \in N(S) \text{ such that } \{w, v\} \in E_G\}$. We define $C_t$, $W_t$, and $L_t$ as the sets of vertices that are cleared, contaminated, and occupied by lions at time step $t$, respectively. We define a \emph{path} $\pi=\{\pi_1, \dots\}$ to represent the set of lions' movements at each time step, where the tuple $(u, v)\in \pi_t$ denotes a lion moving from vertex $u$ to $v$ at time step $t$. With these notations, we can formalize the rules of the lions and contamination as follows:
\begin{enumerate}[(i)]
    \item \(W_t = (W_{t-1} \setminus L_t) \cup \{v \in V_G \setminus L_t \mid \exists w \in W_{t-1} \cap N(v) \text{ such that } (v, w) \notin \pi_t \land (w, v) \notin \pi_t \}\)
    \item \(C_t = V_G \setminus W_t\)
\end{enumerate}
Here, $W_0=V_G\setminus L_0$. Additionally, \(\mathcal{L}_\pi(G)\) denotes the number of lions used by the path \(\pi\). Then, we say a graph $G$ is clearable with $\mathcal{L}_\pi(G)$ lions if there exists a path $\pi$ that utilizes $\mathcal{L}_\pi(G)$ lions and $W_T=\emptyset$ for some time step $T$.
Our goal is to determine the minimum number of lions required to clear \(G\), that is,
\[
\mathcal{L}(G) \;:=\; \min_{\pi} \mathcal{L}_{\pi}(G),
\]
where the minimum ranges over all feasible paths \(\pi\) that clear \(G\).

In this paper, we use a standard parameter called pathwidth \cite{dereniowski2012pathwidth,RobertsonI} of a graph $G$ to bound $\mathcal{L}(G)$. The following proposition shows the conditions of pathwidth.

\begin{proposition}\label{prop:pw-basics}
Let $\mathcal{B}=(\mathcal{B}_1, \dots, \mathcal{B}_n)$ be a path decomposition of a graph $G$ where $\mathcal{B}_i\subseteq V_G$ for $i\in \{1, \dots, n\}$.
\begin{enumerate}[(i)]
  \item\label{prop:pw-basics:i} $\bigcup_{i=1}^n \mathcal{B}_i = V_G$,
  \item\label{prop:pw-basics:ii} For $i < j$, if $v \in \mathcal{B}_i \cap \mathcal{B}_j$, then $v \in \mathcal{B}_m$ for every integer $i \leq m \leq j$,
  \item\label{prop:pw-basics:iii} For every edge $\{v, u\} \in E_G$, there exists $i\in\{1,\dots,n\}$ such that $v \in \mathcal{B}_i$ and $u \in \mathcal{B}_i$.
\end{enumerate}
\end{proposition}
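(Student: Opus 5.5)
Proposition~\ref{prop:pw-basics} is essentially a restatement of the definition of a path decomposition, so the plan is to unpack that definition and check each item directly. I will use the standard formulation from \cite{RobertsonI,dereniowski2012pathwidth}. A path decomposition of $G$ is a tree decomposition $(P,\{\mathcal{B}_x\}_{x\in V_P})$ whose underlying tree $P$ is a path $x_1x_2\cdots x_n$; we write $\mathcal{B}_i:=\mathcal{B}_{x_i}$. A tree decomposition satisfies three axioms: every vertex lies in some bag, every edge has both endpoints in some bag, and for each $v\in V_G$ the set of nodes whose bags contain $v$ induces a connected subtree. The width is $\max_i|\mathcal{B}_i|-1$, and $\operatorname{pw}(G)$ is the minimum width over all path decompositions.

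Items (\ref{prop:pw-basics:i}) and (\ref{prop:pw-basics:iii}) are then literally the vertex-covering and edge-covering axioms, read along the path ordering. Item (\ref{prop:pw-basics:ii}) needs one short argument. The connected subtrees of a path $x_1\cdots x_n$ are exactly the sets of consecutive nodes $\{x_a,\dots,x_b\}$. Hence, if $v\in\mathcal{B}_i\cap\mathcal{B}_j$ with $i<j$, the node set $\{x_k : v\in\mathcal{B}_k\}$ is an interval containing both $x_i$ and $x_j$. It therefore contains every $x_m$ with $i\le m\le j$, which gives $v\in\mathcal{B}_m$.

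To show the characterization is faithful, I will also note the converse. Any sequence of subsets of $V_G$ satisfying (\ref{prop:pw-basics:i})--(\ref{prop:pw-basics:iii}) defines a tree decomposition on the path $x_1\cdots x_n$. The only step needing care is again the connectivity axiom. By (\ref{prop:pw-basics:ii}), for each $v$ the index set $\{k : v\in\mathcal{B}_k\}$ has no gaps, and by (\ref{prop:pw-basics:i}) it is nonempty. So it is an interval, and the corresponding nodes form a connected subpath.

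The only real (and mild) obstacle is this equivalence between "connected subtree of a path" and "interval of indices." Everything else is bookkeeping. I would dispatch it by the elementary fact that removing any interior node $x_m$ disconnects $x_i$ from $x_j$ whenever $i<m<j$.
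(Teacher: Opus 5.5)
Your proposal is correct: the paper gives no proof and simply treats (i)--(iii) as the defining conditions of a path decomposition, citing standard sources. Your derivation from the tree-decomposition formulation, using the fact that the connected subgraphs of a path are exactly its index intervals, is the routine check the paper leaves implicit, and the converse you add confirms the two formulations agree.
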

The width of the path decomposition $\mathcal{B}$ is $\max_{i=1,\dots, n}|\mathcal{B}_{i}|~-~1$, and the \emph{pathwidth} of a graph $G$, $\operatorname{pw}(G)$, is the minimum width over all path decompositions of $G$.

To facilitate our proofs for the lower and upper bounds, we define two auxiliary operations, which we call \emph{remote operations}. Although the standard game rules do not allow clearing a vertex without a lion nor contaminating a vertex without neighboring contamination, defining these virtual operations allows us to analyze the game state more rigorously. We define \emph{remote clear} and \emph{remote contamination} as follows:
\begin{definition}[Remote contamination]
    A \emph{remote contamination} at vertex $v$ at time step $t$ is the operation of adding $v$ to $W_t$, regardless of whether $v$ has any adjacent contaminated vertex.
\end{definition}
\begin{definition}[Remote clear]
    A \emph{remote clear} at vertex $v$ at time step $t$ is the operation of adding $v$ to $C_t$, regardless of whether any lion occupies $v$.
\end{definition}

For simplicity of analysis, we record these hypothetical operations in our path $\pi$. We emphasize that these are strictly analytical tools; thus, we assume that any valid clearing path $\pi$ does not contain these remote operations unless explicitly stated otherwise. We first prove the following supporting lemmas:

\begin{lemma}\label{lemma:contamination_subset_lemma}
Let $W_t$ and $W'_t$ be the contaminated sets under paths $\pi$ and $\pi'$, respectively. If $W'_t \subseteq W_t$ and $\pi_{t+k} = \pi'_{t+k}$ for every integer $k \ge 1$, then $W'_{t+k} \subseteq W_{t+k}$.
\end{lemma}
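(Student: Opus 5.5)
We argue by induction on $k$. The base case $k=0$ is the hypothesis $W'_t \subseteq W_t$. For the inductive step, assume $W'_{t+k-1}\subseteq W_{t+k-1}$ for some $k\ge 1$. Since $\pi_{t+k}=\pi'_{t+k}$, both paths make the same moves at step $t+k$. Hence they occupy the same lion set $L_{t+k}=L'_{t+k}$, namely the destinations of the moves in $\pi_{t+k}$. They also use the same set of traversed edges, which are exactly the edges excluded in rule~(i). So the contaminated set at step $t+k$ is obtained by the same update operator applied to $W_{t+k-1}$ and to $W'_{t+k-1}$, respectively.

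The key step is that this update operator is monotone in the previous contaminated set. By rule~(i),
\[
W_{t+k} = (W_{t+k-1}\setminus L_{t+k}) \cup \{v\in V_G\setminus L_{t+k} \mid \exists w\in W_{t+k-1}\cap N(v),\ (v,w)\notin\pi_{t+k}\land (w,v)\notin\pi_{t+k}\}.
\]
Fix $L_{t+k}$ and $\pi_{t+k}$. The first term $W_{t+k-1}\setminus L_{t+k}$ is clearly monotone in $W_{t+k-1}$. In the second term, the existential condition $\exists w\in W_{t+k-1}\cap N(v)$ becomes easier to satisfy when $W_{t+k-1}$ is enlarged, and the side conditions do not depend on $W_{t+k-1}$. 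So the second term is monotone as well. A union of monotone set-valued maps is monotone. Therefore $W'_{t+k-1}\subseteq W_{t+k-1}$ implies $W'_{t+k}\subseteq W_{t+k}$, which closes the induction.

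The one place needing care is the remote operations recorded in $\pi$. A remote contamination adds the same vertex to both sets, and a remote clear removes the same vertex from both sets. Both are monotone maps $X\mapsto X\cup\{v\}$ and $X\mapsto X\setminus\{v\}$. So if $\pi_{t+k}$ contains such operations, they occur identically in $\pi'_{t+k}$ and inclusion is still preserved. I expect this bookkeeping, namely checking that equal steps $\pi_{t+k}=\pi'_{t+k}$ yield identical lion sets and identical excluded edges, to be the only subtle point. The rest is a direct monotonicity check.
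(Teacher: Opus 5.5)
Your proof is correct and follows essentially the same route as the paper: induction on $k$, using the fact that identical moves at step $t+k$ give both paths the same lion set and the same guarded edges, so both terms of rule~(i) are monotone in the previous contaminated set. Your remark that remote clears and remote contaminations act as monotone maps is a slightly more careful addition that the paper leaves implicit.
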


\begin{proof}
We prove by induction on $k$. For the base case $k=1$, by the problem definition, the contaminated set at time $t+1$ under path $\pi$ is given by $W_{t+1} = (W_t \setminus L_{t+1}) \cup \{v \in V_G \setminus L_{t+1} \mid \exists w \in W_t \cap N(v) \text{ such that } (v, w) \notin \pi_t \land (w, v) \notin \pi_t\}$.
Since $W'_t \subseteq W_t$, it is clear that $(W'_t \setminus L_{t+1}) \subseteq (W_t \setminus L_{t+1})$. 
Furthermore, since $\pi$ and $\pi'$ operate on the same vertex set $V_G$ and $\pi_{t+1} = \pi'_{t+1}$, the set of guarded edges is identical for both paths. Because $\{w \in W'_t \cap N(v)\} \subseteq \{w \in W_t \cap N(v)\}$, any vertex newly contaminated from $W'_t$ will also be contaminated from $W_t$. Thus, the set of newly contaminated vertices under $\pi'$ is a subset of the corresponding set under $\pi$.

Combining these two parts yields $W'_{t+1} \subseteq W_{t+1}$. By applying this argument inductively for any $k \ge 1$, assuming $W'_{t+k-1} \subseteq W_{t+k-1}$ and given $\pi_{t+k} = \pi'_{t+k}$, we obtain $W'_{t+k} \subseteq W_{t+k}$ for every integer $k \ge 1$.
\end{proof}

\begin{lemma}\label{lemma:remote_clear_lemma}
Let $\pi$ be a path that clears the graph $G$, and let $\pi'$ be a path obtained by adding some remote clear operations to $\pi$. Then, $\pi'$ can also clear the graph $G$.
\end{lemma}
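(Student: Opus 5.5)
The natural route is to reduce the statement to \cref{lemma:contamination_subset_lemma}, applied separately to each remote clear, in order of time. Let $\pi'$ be obtained from $\pi$ by inserting remote clears at time steps $t_1 \le t_2 \le \dots \le t_r$. Write $W_t$ and $W'_t$ for the contaminated sets under $\pi$ and $\pi'$. The claim to prove is $W'_t \subseteq W_t$ for every $t$. Since $\pi$ clears $G$, there is a $T$ with $W_T=\emptyset$, so this claim gives $W'_T=\emptyset$ and $\pi'$ clears $G$.

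To handle one remote clear at a time, set up an intermediate family of paths. Let $\pi^{(j)}$ be $\pi$ with only the first $j$ remote clears inserted, so that $\pi^{(0)}=\pi$ and $\pi^{(r)}=\pi'$. The main step is to show $W^{(j)}_t \subseteq W^{(j-1)}_t$ for all $t$.

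For $t<t_j$ the two paths agree, so their contaminated sets coincide. At time $t_j$ the two paths apply the same lion moves and the same earlier remote operations. The only difference is that $\pi^{(j)}$ additionally adds some vertex $v$ to $C_{t_j}$, i.e.\ removes $v$ from $W_{t_j}$. Hence $W^{(j)}_{t_j} = W^{(j-1)}_{t_j}\setminus\{v\} \subseteq W^{(j-1)}_{t_j}$. After time $t_j$ the two paths have identical moves, with one exception: later remote clears at the same time step $t_j$ should be counted within step $t_j$ itself, for instance by processing all clears with equal time stamps together.

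Then \cref{lemma:contamination_subset_lemma} gives the inclusion for all $t>t_j$. Chaining the inclusions over $j=1,\dots,r$ yields $W'_t\subseteq W_t$.

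One detail needs care. The lemma as stated assumes that the two paths coincide from $t+1$ onward, but $\pi^{(j)}$ and $\pi^{(j-1)}$ differ again at later remote clears $t_{j+1},\dots$. The ordering above is designed to avoid this: $\pi^{(j)}$ and $\pi^{(j-1)}$ agree at every step other than $t_j$, so the lemma applies directly.

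If the remote clears are instead compared directly between $\pi'$ and $\pi$, the lemma's proof must be extended. One must check that applying the same remote clear to both of two nested sets $W'\subseteq W$ preserves the inclusion, which it does since $W'\setminus\{v\}\subseteq W\setminus\{v\}$.

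The main obstacle is the formal bookkeeping of how a remote operation recorded in $\pi_t$ interacts with the update rule (i). In particular, one has to decide whether the remote clear is applied after the spread at step $t$. I will fix the convention that it is applied after computing $W_t$ by rule (i). Under this convention the inclusion argument goes through at each step, because rule (i) is monotone in $W_{t-1}$ for fixed moves.
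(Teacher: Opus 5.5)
Your proposal is correct and follows the paper's proof. For a single remote clear you get $W'_{t}=W_t\setminus\{v\}\subseteq W_t$ at the time of the clear, propagate the inclusion forward with \cref{lemma:contamination_subset_lemma}, and induct on the number of remote clears; your chain of intermediate paths $\pi^{(j)}$, ordered by time, just makes explicit the one-line induction the paper states, and it correctly ensures consecutive paths agree at every step after the differing one.
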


\begin{proof}
Let $W_t$ and $W'_t$ be the contaminated sets under paths $\pi$ and $\pi'$, respectively. We first consider the case where $\pi'$ is obtained by adding a remote clear operation to a vertex $v$ at time step $t$. Under this operation, for all $t' \le t$, we have $W'_{t'} \subseteq W_{t'}$ (specifically, $W'_t = W_t \setminus \{v\}$ if $v$ was cleared, or $W'_t = W_t$ otherwise). 
Since no other operations are changed, $\pi'_{t+k} = \pi_{t+k}$ for all $k \ge 1$. By \cref{lemma:contamination_subset_lemma}, it follows that $W'_{t+k} \subseteq W_{t+k}$ for all $k \ge 1$. Thus, $W'_{t'} \subseteq W_{t'}$ holds for all time steps $t' \ge 0$.
Because $\pi$ clears the graph $G$, there exists some time step $T$ such that $W_T = \emptyset$. Since $W'_T \subseteq W_T$, it follows that $W'_T = \emptyset$, meaning $\pi'$ also clears $G$.
If $\pi'$ contains multiple remote clear operations, we can apply this logic inductively on the number of operations. Repeatedly adding single remote clear operations preserves the subset property globally, thus $\pi'$ can always clear the graph $G$.
\end{proof}

\begin{lemma}\label{lemma:remote_contaminate_lemma}
Let $\pi'$ be a path obtained by adding some remote contamination operations to $\pi$. If $\pi'$ clears the graph $G$, then $\pi$ can also clear the graph $G$.
\end{lemma}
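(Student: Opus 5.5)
The plan is to mirror the proof of \cref{lemma:remote_clear_lemma}, with the roles of $\pi$ and $\pi'$ swapped. The target is the invariant $W_t \subseteq W'_t$ for every time step $t \ge 0$, where $W_t$ and $W'_t$ are the contaminated sets under $\pi$ and $\pi'$. Once this holds, the conclusion is immediate: since $\pi'$ clears $G$, there is some $T$ with $W'_T=\emptyset$, hence $W_T=\emptyset$ and $\pi$ also clears $G$.

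I would first treat the case of a single remote contamination at a vertex $v$ at time step $t$. Before time $t$ the two paths perform identical operations and start from the same $W_0$, so $W_{t'}=W'_{t'}$ for all $t'<t$. At step $t$ both paths use the same lion movements $\pi_t=\pi'_t$, so they produce the same set by rule (i). Then $\pi'$ additionally inserts $v$, giving $W'_t = W_t \cup \{v\} \supseteq W_t$. For every $k\ge 1$ we have $\pi_{t+k}=\pi'_{t+k}$, because the only difference between the paths is the remote operation at step $t$. Applying \cref{lemma:contamination_subset_lemma} with the roles of the two paths interchanged (the smaller set is now $W_t$, the larger is $W'_t$) yields $W_{t+k}\subseteq W'_{t+k}$ for all $k\ge1$. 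Combining the three ranges gives $W_{t'}\subseteq W'_{t'}$ for all $t'\ge 0$.

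For several remote contaminations, I would induct on their number. Write $\pi=\pi^{(0)},\pi^{(1)},\dots,\pi^{(m)}=\pi'$, where each path adds one remote contamination to the previous one. The single-operation argument gives $W^{(i)}_{t}\subseteq W^{(i+1)}_{t}$ for all $t$, and chaining these inclusions gives $W_t\subseteq W'_t$. One point needs care: when $\pi^{(i+1)}$ has further remote contaminations at later steps that $\pi^{(i)}$ lacks, their later steps differ, so \cref{lemma:contamination_subset_lemma} cannot be applied verbatim. To avoid this, I would add the operations in reverse chronological order. Then the only operation distinguishing $\pi^{(i)}$ from $\pi^{(i+1)}$ is the newly added one, and the two paths agree at every other step, which is exactly the setting of the single case. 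Alternatively, I would note that the one-step monotonicity argument inside \cref{lemma:contamination_subset_lemma} also survives an extra union with a remote-contaminated vertex on the larger side, so monotonicity propagates through steps where only $\pi'$ performs a remote contamination.

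The main obstacle is this bookkeeping: making sure \cref{lemma:contamination_subset_lemma} applies when remote operations are interleaved with ordinary moves. The essential fact is the one-step monotonicity of rule (i) in the contaminated set for fixed lion moves, together with the observation that adding vertices to the larger set preserves the inclusion. Everything else is routine.
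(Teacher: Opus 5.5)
Your proposal is correct and follows the paper's proof. Like the paper, you obtain $W_t\subseteq W'_t$ at the step of the remote contamination, propagate the inclusion with \cref{lemma:contamination_subset_lemma}, and conclude from $W'_T=\emptyset$; your induction on the number of operations spells out what the paper leaves to ``analogous to \cref{lemma:remote_clear_lemma}'' (one small correction: consecutive paths in your chain differ by exactly one operation in any order, and it is the chronological order that lets \cref{lemma:contamination_subset_lemma} apply verbatim, since neither path has remote operations after the added one, whereas your reverse order relies on your second observation that identical later remote contaminations on both sides preserve the inclusion, which is true, so the argument stands).
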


\begin{proof}
The proof is analogous to \cref{lemma:remote_clear_lemma}. Adding a remote contamination operation to $\pi$ to form $\pi'$ implies that at the time $t$ of the operation, the contaminated set under $\pi$ is a subset of the contaminated set under $\pi'$ (i.e., $W_t \subseteq W'_t$). 
By applying \cref{lemma:contamination_subset_lemma}, this implies $W_{t'} \subseteq W'_{t'}$ for all time steps $t' \ge 0$. Since $\pi'$ clears the graph, there exists a time step $T$ where $W'_T = \emptyset$. Therefore, $W_T \subseteq W'_T = \emptyset$, which means $W_T = \emptyset$. Thus, $\pi$ also clears the graph $G$.
\end{proof}

\section{Isometric Subgraphs and Trees}\label{sec:tree}
In this section, we show that a supergraph always requires at least as many lions as its isometric subgraph, and prove that the required number of lions to clear a tree can be completely characterized in terms of pathwidth.

\begin{lemma}\label{lemma:constant_remote_lemma}
Given a graph $G$ and its subgraph $H$, let $S=V_G\setminus V_H$. Let $\pi'$ be the path obtained by adding a remote clear operation to $\pi$ for every vertex in $S$ at every time step. If $\pi$ does not involve any vertices in $S$, then $\pi$ clears $H$ if and only if $\pi'$ clears $G$, and in either case $\mathcal{L}_\pi(H) = \mathcal{L}_{\pi'}(G)$.
\end{lemma}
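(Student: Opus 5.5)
The plan is to compare the two runs directly and prove, by induction on $t$, that the contaminated sets coincide on $V_H$. Let $W^H_t$ denote the contaminated set when $\pi$ is run on $H$, and $W^G_t$ the contaminated set when $\pi'$ is run on $G$. The invariant to establish is
\[
W^G_t = W^H_t \quad\text{and}\quad W^G_t\cap S=\emptyset \qquad\text{for every } t\ge 0,
\]
where the second part holds because every vertex of $S$ is remotely cleared at every step, so it never lies in $W^G_t$.

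For the base case, the lions of $\pi$ sit only on vertices of $H$. Hence $W^H_0=V_H\setminus L_0$. In $G$, the remote clears at time $0$ remove $S$, so $W^G_0=(V_G\setminus L_0)\setminus S=V_H\setminus L_0$.

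For the inductive step, assume the invariant at time $t-1$ and apply rule (i) in $G$ at time $t$. Any vertex $v$ that becomes contaminated needs a contaminated neighbour $w\in W^G_{t-1}=W^H_{t-1}\subseteq V_H$.
\begin{itemize}
\item If $v\in V_H$, the edge $\{v,w\}$ lies in $E_H$, since $H$ must contain the relevant edges. The guard condition $(v,w),(w,v)\notin\pi_t$ is the same in both runs, because $\pi'_t$ and $\pi_t$ have identical lion moves. So the contamination rule on $V_H$ produces exactly the same set in $G$ as in $H$.
\item If $v\in S$, then $v$ may get contaminated by rule (i), but the remote clear at time $t$ removes it immediately. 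This relies on reading the remote clear as applied after the spreading step, which is how the definition ("adding $v$ to $C_t$") should be interpreted.
\end{itemize}
Together these give $W^G_t=W^H_t$.

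The main obstacle is the edge issue in the first case. If $H$ were not an induced subgraph, then an edge of $G$ between two $H$-vertices that is absent from $H$ could carry contamination in $G$ but not in $H$. I would resolve this by assuming, as intended in this section, that $H$ is induced (isometric subgraphs are induced), or by noting that the lemma is applied with $H=G[V_H]$. I would state this assumption explicitly in the proof.

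Once the invariant holds, the conclusion follows. For every $T$ we have $W^H_T=\emptyset$ if and only if $W^G_T=\emptyset$, so $\pi$ clears $H$ exactly when $\pi'$ clears $G$. Finally, the remote operations place no lions, so $\pi'$ uses exactly the lions of $\pi$, giving $\mathcal{L}_\pi(H)=\mathcal{L}_{\pi'}(G)$.
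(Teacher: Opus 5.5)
Your proof is correct and follows essentially the same route as the paper: an induction on $t$ showing $W^G_t=W^H_t$, with the per-step remote clears keeping $S$ uncontaminated. Your induced-subgraph caveat is also justified, since the paper's proof silently assumes it. For a non-induced $H$ the direction ``$\pi$ clears $H\Rightarrow\pi'$ clears $G$'' really fails; for example, take the path $a$--$b$--$c$ inside a triangle, swept by one lion moving from $a$ to $c$. The converse direction, which is the only one used later, holds for every subgraph because extra edges only add contamination, and the later applications involve isometric (hence induced) subgraphs anyway.
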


\begin{proof}
Let $W_t$ be the set of contaminated vertices in $H$ under path $\pi$ at time step $t$, and let $W'_t$ be the set of contaminated vertices in $G$ under path $\pi'$ at time step $t$. Note that $\pi$ and $\pi'$ share the exact same lion trajectories ($L_t$) within $V_H$, and $\pi$ has no lions in $S$.

Since path $\pi'$ applies remote clear on $S$ at every time step, every vertex in $S$ remains cleared for all $t$. That is, $S \cap W'_t = \emptyset$ for all $t$. Consequently, no contamination can spread from $S$ to $H$ in graph $G$. This mimics the boundary condition of an isolated graph $H$, where no external contamination exists.

Thus, for any vertex $v \in V_H$, the condition for contamination (neighbors being contaminated and no lion blocking) is identical in both scenarios. By induction, $W_t = W'_t \cap V_H$ for all $t$. Since we established that $W'_t \cap S = \emptyset$, it follows that $W'_t = W_t$.

Since both paths utilize the same set of lions, the number of lions required is identical. Therefore, $\pi$ successfully clears $H$ (i.e., $W_t = \emptyset$) if and only if $\pi'$ successfully clears $G$ (i.e., $W'_t = \emptyset$). Consequently, if $\pi$ is a valid clearing path for $H$, its extension $\pi'$ clears $G$ using the exact same number of lions, thus $\mathcal{L}_\pi(H)=\mathcal{L}_{\pi'}(G)$.
\end{proof}

\begin{theorem}\label{theorem:isometric_subgraph_theorem}
For a graph $G$ and its isometric subgraph $H$, $\mathcal{L}(H) \leq \mathcal{L}(G)$.
\end{theorem}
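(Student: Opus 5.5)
The plan is to turn an optimal clearing path $\pi$ of $G$ into a path $\pi^H$ that uses only vertices of $H$ and the same number of lions. We then show that $\pi^H$ clears $H$ by comparing contamination, in the spirit of \cref{lemma:contamination_subset_lemma} and \cref{lemma:constant_remote_lemma}. Let $S=V_G\setminus V_H$ and let $\pi$ clear $G$ with $\mathcal{L}(G)$ lions.

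\textbf{Construction.} Consider each lion separately and split its trajectory into maximal time intervals during which it lies in $S$ (its \emph{excursions}).
\begin{itemize}
\item \emph{Interior excursion.} Suppose the lion leaves $H$ from $u\in V_H$ at time $t_1$ and next re-enters $H$ at $w\in V_H$ at time $t_2$. Its walk in $G$ shows $\operatorname{dist}_G(u,w)\le t_2-t_1$. Because $H$ is isometric, $\operatorname{dist}_H(u,w)\le t_2-t_1$. During $(t_1,t_2]$, the modified lion therefore follows a shortest $u$--$w$ path in $H$ and then waits at $w$, arriving exactly at time $t_2$.
\item \emph{Final excursion.} If the lion leaves $H$ and never returns, it simply stays at its last $H$-vertex.
\item \emph{Initial excursion.} If the lion starts in $S$ and first enters $H$ at $w$ at time $t_2$, it starts at $w$ and waits there.
\item \emph{No visit.} If the lion never visits $H$, it stays at an arbitrary vertex of $H$.
\end{itemize}
This gives a valid path $\pi^H$ in $H$ using $\mathcal{L}_\pi(G)$ lions. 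Its key property is that whenever a lion of $\pi$ is at a vertex of $H$ at time $t$, the corresponding lion of $\pi^H$ is at the same vertex. Likewise, whenever a lion of $\pi$ traverses an edge with both endpoints in $V_H$, the corresponding lion of $\pi^H$ traverses that same edge. In particular, $L_t\cap V_H\subseteq L^H_t$, and every $H$-edge guarded by $\pi$ at step $t$ is guarded by $\pi^H$.

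\textbf{Comparison.} Let $W^H_t$ be the contamination of $H$ under $\pi^H$ and $W_t$ the contamination of $G$ under $\pi$. We prove $W^H_t\subseteq W_t\cap V_H$ by induction on $t$.
\begin{itemize}
\item \emph{Base case.} At $t=0$, we have $W^H_0=V_H\setminus L^H_0\subseteq V_H\setminus L_0$.
\item \emph{Inductive step.} Suppose $v\in W^H_t$. Then $v\notin L^H_t$, so by the key property $v\notin L_t$. Either $v\in W^H_{t-1}\subseteq W_{t-1}$, which gives $v\in W_t$. Or $v$ has a neighbor $w\in W^H_{t-1}\subseteq W_{t-1}$, with $\{v,w\}\in E_H\subseteq E_G$, such that the edge $\{v,w\}$ is unguarded in $\pi^H$. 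By the key property this edge is then also unguarded in $\pi$, so contamination spreads to $v$ in $G$ as well.
\end{itemize}
Now take $T$ with $W_T=\emptyset$. Then $W^H_T=\emptyset$, so $\mathcal{L}(H)\le\mathcal{L}_\pi(G)=\mathcal{L}(G)$.

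Equivalently, the comparison can be phrased through \cref{lemma:constant_remote_lemma}: extend $\pi^H$ by remote clears on $S$ and compare it with $\pi$ on $G$. The direct induction above avoids needing a stronger version of \cref{lemma:contamination_subset_lemma} with differing paths.

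\textbf{Main obstacle.} The delicate step is the rerouting within an excursion: the substitute lion must be at exactly the re-entry vertex at exactly the re-entry time. This is the only place where isometry is used, through $\operatorname{dist}_H\le\operatorname{dist}_G$ on $V_H$; it is also what breaks for arbitrary subgraphs. The second point needing care is that the extra lion positions and moves introduced by $\pi^H$ can only add clearing and guarding and never remove any. Hence the inclusion $W^H_t\subseteq W_t$ is preserved by the edge-by-edge check above.
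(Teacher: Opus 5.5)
Your proof is correct and follows essentially the paper's argument: reroute each lion's excursions through $S=V_G\setminus V_H$ along shortest paths in $H$, padded with waits, and then compare contamination. Isometry is used there, together with the (implicit in your write-up) fact that an isometric subgraph is induced, so moves of $\pi$ between two $H$-vertices remain legal in $H$. The difference is only in packaging: the paper first remote-clears $S$ via \cref{lemma:remote_clear_lemma} and removes those operations at the end via \cref{lemma:constant_remote_lemma}, whereas you prove $W^H_t\subseteq W_t\cap V_H$ by a direct induction, which also makes explicit the initial, final and never-visiting excursions and the ``extra lions only help'' step that the paper states only briefly.
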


\begin{proof}
Let $S=V_G\setminus V_H$ and let $\pi$ be a path that clears $G$ such that $\mathcal{L}_\pi(G)=\mathcal{L}(G)$. By \cref{lemma:remote_clear_lemma}, if we denote $\pi'$ as the path after applying remote clear operations to every vertex in $S$ at every time step $t$, then $\mathcal{L}_{\pi'}(G) \leq \mathcal{L}(G)$. Now, consider the trajectory of a single lion in $\pi'$. If there exists any trajectory segment $(v, u_1, \dots, u_n, w)$ where $u_i \in S$, we can replace it with $(v, u'_1, \dots, u'_n, w)$ where $u'_i \notin S$ (i.e., $u'_i \in V_H$). Since $H$ is an isometric subgraph of $G$, we have $d_H(v, w) = d_G(v, w) \le n+1$. Because the game rules allow a lion to stay on the same vertex, we can construct a valid trajectory $(v, u'_1, \dots, u'_n, w)$ entirely within $H$ by taking a shortest path and adding waiting steps if necessary. Since we consistently apply the remote clear operation to $u_i$, $u_i$ remains cleared regardless of the lion's presence. This change of trajectory never decreases the size of $C'_t$. Thus, if $\pi''$ is the path after modifying every such trajectory, then $\mathcal{L}_{\pi''}(G) \leq \mathcal{L}_{\pi'}(G)$. Finally, by \cref{lemma:constant_remote_lemma}, after dropping the remote clear operations from $\pi''$ to create $\pi'''$, we get $\mathcal{L}_{\pi'''}(H) = \mathcal{L}_{\pi''}(G) \leq \mathcal{L}(G)$ for every path $\pi$. Since $\pi'''$ does not have any remote operations in it, $\mathcal{L}(H)\leq \mathcal{L}_{\pi'''}(H)\leq \mathcal{L}(G)$ holds.
\end{proof}

Since a tree and its subtrees form isometric subgraphs, we can use \cref{lemma:constant_remote_lemma} and \cref{theorem:isometric_subgraph_theorem} to prove the lower and upper bounds of required lions on trees.
\begin{lemma}\label{lemma:neighboring_contamination_component_lemma} For every connected component $S\subseteq W_t$ where $N(S)\subseteq C_t$, $N(S)\cap L_t\neq \emptyset$.
\end{lemma}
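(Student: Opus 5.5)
The statement is implicitly quantified over time steps $t \ge 1$ (at $t=0$ the only cleared vertices are the lions themselves, so the claim is immediate there). I would argue by contradiction using the contamination rule (i). Fix $t\ge 1$ and a connected component $S$ of the subgraph induced by $W_t$ with $N(S)\subseteq C_t$. Suppose $N(S)\cap L_t=\emptyset$. The aim is to show that some vertex of $N(S)$ must lie in $W_t$, contradicting $N(S)\subseteq C_t$.

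The key steps, in order, are as follows.
\begin{enumerate}[(1)]
\item First dispose of the degenerate case. If $N(S)=\emptyset$, then $S$ is a whole connected component of $G$; since the game is played on a connected graph, $S=V_G$. The claim then fails only in the trivial situation where everything is contaminated, which cannot persist past $t=0$ when there is at least one lion. So assume $N(S)\neq\emptyset$, and pick $v\in N(S)$ together with a neighbor $w\in S$.
\item Next, determine the status of $w$ at time $t-1$. By rule (i), $w\in W_t$ means either $w\in W_{t-1}\setminus L_t$, or $w$ was newly contaminated from some contaminated neighbor in $W_{t-1}$.
\item In the first case, $w\in W_{t-1}$, and $v\notin L_t$ by assumption. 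So rule (i) puts $v$ into $W_t$ unless the edge $\{v,w\}$ is traversed by a lion in $\pi_t$. If it is traversed, the lion ends at $v$ or at $w$. Ending at $w$ is impossible since $w\in W_t$ is not occupied, and ending at $v$ contradicts $v\notin L_t$. Hence $v\in W_t$, a contradiction.
\item In the second case, I would not work with the single vertex $w$ but pass to the whole component. Since $S\neq\emptyset$ is contaminated at time $t$, trace the contamination back: either some vertex of $S$ was already in $W_{t-1}$, or each vertex of $S$ was newly contaminated from a neighbor in $W_{t-1}$. That neighbor is either in $S$ (which returns us to the first alternative) or in $N(S)$, which would make it a contaminated vertex of $N(S)$ at time $t-1$.
\end{enumerate}

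I expect step (4) to be the main obstacle: the contamination inside $S$ may have entered from $N(S)$ at time $t-1$, with the vertices of $N(S)$ cleared only at time $t$. A vertex of $N(S)$ can only move from $W_{t-1}$ into $C_t$ by becoming occupied at time $t$, which contradicts $N(S)\cap L_t=\emptyset$. So every contaminated vertex of $N(S)$ at time $t-1$ yields a contradiction.

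With that settled, some $w'\in S\cap W_{t-1}$ exists. Since $S$ is connected and contained in $W_t\subseteq V_G\setminus L_t$, I would propagate along a path inside $S$ from $w'$ to a vertex $w$ adjacent to $v\in N(S)$. This shows that a vertex of $S$ in $W_{t-1}$ lies next to the boundary, at which point step (3) applies. The propagation is the delicate part: contamination spreads only one edge per step, so I might instead phrase the whole argument as an induction on $t$. At each step I would check that a lion-free boundary of a contaminated component can never have been cleared, since clearing requires occupation.
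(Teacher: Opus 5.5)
\textbf{There is a genuine gap in how you finish.} Your steps (3) and (4) are correct and match the first two steps of the paper's proof. Since $v\in N(S)$ was arbitrary, they give $N(S)\subseteq C_{t-1}$, $\partial(S)\subseteq C_{t-1}$, and $S\cap W_{t-1}\neq\emptyset$.

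From $w'\in S\cap W_{t-1}$ you cannot ``propagate along a path inside $S$'' to reach a vertex of $\partial(S)\cap W_{t-1}$. One application of rule (i) puts the vertices of that path into $W_t$, not into $W_{t-1}$. After your steps (3) and (4), the situation you are left with is exactly this: the contamination at time $t-1$ lies strictly inside $S$, and $\partial(S)$ is contaminated only at step $t$. A single transition cannot exclude this. For example, take the path $a,b,c,d,e$ with $W_{t-1}=\{c\}$ and lions stepping from $b,d$ to $a,e$. Then $S=\{b,c,d\}$ and $\partial(S)=\{b,d\}\subseteq C_{t-1}$.

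So the contradiction has to be pushed back in time. Your fallback, ``induction on $t$'', has the right shape, but you never say which set the statement at time $t-1$ is applied to, or why its hypotheses hold there. The slogan that a lion-free boundary ``can never have been cleared'' merely restates the claim.

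\textbf{The missing idea is a descent step.} Let $S'$ be the component of $G[W_{t-1}]$ containing $w'$.
\begin{enumerate}
\item Because $\partial(S)\cup N(S)\subseteq C_{t-1}$, you get $S'\subseteq S\setminus\partial(S)$. Hence $N(S')\subseteq S$, and $N(S')\subseteq C_{t-1}$ holds automatically.
\item $S\cap L_{t-1}=\emptyset$. A lion on a vertex of $S$ at time $t-1$ would, at time $t$, be on that vertex or a neighbor, i.e.\ in $S\cup N(S)$. That set has no lion at time $t$, because $S\subseteq W_t$ and $N(S)\cap L_t=\emptyset$.
\item Therefore $S'$ is a contaminated component at time $t-1$ whose neighborhood is cleared and lion-free. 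This is the same forbidden configuration one step earlier. Note that $N(S')\neq\emptyset$, since $G$ is connected and $S'\neq V_G$.
\end{enumerate}
Choosing $t$ minimal gives the contradiction; equivalently, induct on $t$ from the base case $t=0$, where $N(S)\subseteq C_0=L_0$. This is how the paper's proof concludes.
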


\begin{proof}
At time step $0$, it is clear by definition that every contaminated component has at least one neighboring lion. To prove by contradiction, let $S$ be the \emph{first} contaminated component satisfying the condition, i.e., $S \subseteq W_t$, $N(S)\subseteq C_t$, and $N(S)\cap L_t=\emptyset$, and no other vertex set $S'$ satisfies the condition at any time step $t'<t$.

Suppose there is a vertex $w\in \partial(S)\cap W_{t-1}$. By the problem definition, contamination at $w$ spreads to $N(w)$ if there are no lions in $N(w)$ at time step $t$ and no lions traverse any edge between $w$ and a vertex in $N(w)$ at time step $t$. However, since $S \subseteq W_t$ directly implies $S\cap L_t=\emptyset$, and given $N(S)\cap L_t=\emptyset$, there are no lions on $w$ nor moves along the edge between $w$ and $N(w)$ at time step $t$. This implies that $N(S)\nsubseteq C_t$, which is a contradiction. Thus, $\partial(S)\subseteq C_{t-1}$ must hold.

Suppose that $S\setminus \partial(S)\subseteq C_{t-1}$. Since $\partial(S)\subseteq C_{t-1}$, it follows that $S \subseteq C_{t-1}$. By the condition $S\subseteq W_t$, $N(S)\cap W_{t-1}\neq \emptyset$ must hold. However, the condition $N(S)\cap L_t=\emptyset$ implies that any contamination on $N(S)$ can never be cleared at time step $t$; thus, $N(S)\nsubseteq C_{t}$, which is a contradiction. Therefore, $S\setminus \partial(S)\nsubseteq C_{t-1}$, which implies $(S\setminus \partial(S))\cap W_{t-1}\neq \emptyset$. We can find $S'\subset S$ such that $S'\subseteq W_{t-1}$ and $N(S')\subseteq C_{t-1}$. Furthermore, it naturally follows that $N(S')\cap L_{t-1}= \emptyset$ since $(S\cup N(S))\cap L_t=\emptyset$. This contradicts the fact that $t$ is the \emph{first} time step satisfying the condition.
\end{proof}

\begin{lemma}\label{lemma:subtree_on-graph_lion_lemma}
Given a graph $G$ and its isometric subgraph $H=(V_H, E_H)$ where $|\partial(V_H)|=1$, there exists at least one time step where at least $\mathcal{L}(H)$ lions are present in $V_H$ to clear $G$.
\end{lemma}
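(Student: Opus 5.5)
We argue by contradiction, using the same reduction as in \cref{theorem:isometric_subgraph_theorem}. Let $\partial(V_H)=\{r\}$, so $r$ is the only vertex of $H$ with neighbours in $S:=V_G\setminus V_H$. Suppose $\pi$ clears $G$ but at every time step fewer than $\mathcal{L}(H)$ lions are in $V_H$, i.e.\ $|L_t\cap V_H|\le \mathcal{L}(H)-1$ for all $t$. We will build a path $\pi^H$ on $H$ alone that clears $H$ with at most $\mathcal{L}(H)-1$ lions, which contradicts the definition of $\mathcal{L}(H)$.

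\textbf{Step 1: the projected path.} Define $\pi^H$ by keeping, at each step, exactly the lions that are in $V_H$ and discarding the rest. A lion may leave $H$ and come back, necessarily through $r$. Because $H$ is isometric, we can reroute its excursion inside $H$ as in \cref{theorem:isometric_subgraph_theorem}: the simplest choice is to let it wait at $r$ (or follow a shortest path back to its re-entry point, which is again $r$) for the duration. The danger is that the parked lions raise the count in $V_H$ above $\mathcal{L}(H)-1$. To avoid this, we do not park them. Instead we use \emph{lion identities per time step}: at step $t$, $\pi^H$ uses only the set $L_t\cap V_H$. A lion that leaves $H$ through $r$ is simply deleted, and a lion that enters through $r$ is created at $r$. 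Deleting lions only adds contamination, which is harmless for a contradiction argument. Creating a lion at $r$, however, amounts to a remote clear at $r$, and we must justify it. This step is the main obstacle.

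\textbf{Step 2: comparing contamination.} Run $\pi$ on $G$ and $\pi^H$ on $H$, with $W^H_t$ the contaminated set of $H$ under $\pi^H$. We want to show that $W^H_t\subseteq W_t\cap V_H$ for all $t$, treating each appearance of a lion at $r$ as a remote clear. Contamination inside $H$ evolves by the same rule in both runs, except that in $G$ it can also enter through $r$ from $S$, and edges inside $H$ are guarded identically. The comparison argument of \cref{lemma:contamination_subset_lemma} therefore gives the inclusion, as long as the only extra operations in $\pi^H$ are the clears at $r$ and those clears happen exactly when $r\in L_t$ in $\pi$. At such moments $r$ is cleared in both runs, so the inclusion is preserved.

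The remaining issue is that $\pi^H$ must be a legal path, because $\mathcal{L}(H)$ counts only real lions. The entries at $r$ are where this fails. We handle them by keeping one lion that, in $\pi^H$, sits on $r$ whenever no lion of $\pi$ is inside $H$ at all. Entry steps are then realised by that lion already being at $r$. When lions are present in $H$, an entering lion needs a new lion. Here \cref{lemma:neighboring_contamination_component_lemma} should help: a contaminated component of $H$ whose only outside neighbour is $r$ must be guarded by a lion adjacent to it. This lets us argue that a lion which just stepped onto $r$ can be replaced, without loss, by the lion that has to be waiting next to $r$, or that $r$ being clear coincides with an existing lion.

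\textbf{Step 3: conclusion.} If the construction succeeds, $\pi$ clears $G$ at some time $T$, so $W_T=\emptyset$ and hence $W^H_T=\emptyset$. Thus $\pi^H$ clears $H$ with at most $\max_t|L_t\cap V_H|\le\mathcal{L}(H)-1$ lions, a contradiction. The delicate point, which we expect to be the main obstacle, is the bookkeeping of lions entering through $r$ so that the peak count in $V_H$ does not grow. If the direct construction breaks there, the fallback is to allow one extra lion permanently stationed at $r$ in the projected strategy and to use the isometry and cut-vertex structure to absorb it.
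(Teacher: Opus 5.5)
Your Step 2 comparison is sound, but the proof is not finished: the step you yourself call the main obstacle is never carried out. That step is realizing the lions that enter $V_H$ through $r$ by actual lions of $H$, while never using more than $\mathcal{L}(H)-1$ lions. Appealing to \cref{lemma:neighboring_contamination_component_lemma} does not help, because it says nothing about where an entering lion could come from. Your fallback of one extra lion permanently stationed at $r$ breaks the argument, since it gives a clearing of $H$ with up to $\mathcal{L}(H)$ lions, which contradicts nothing. (Also, deleting lions is not harmless in general, since you need $\pi^H$ to clear $H$. It is harmless here only because the deleted lions lie outside $V_H$.) As written, the contradiction is never reached.

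The missing idea is the one you rejected in Step 1. Parking at $r$ does work, provided you also discard surplus parked lions and reuse parked lions for later entries. This is what the paper does: lions leaving $V_H$ through $v$ stay at $v$, and excess lions at $v$ are removed so that at most $K-1$ lions are used. To make it precise, let $K=\mathcal{L}(H)$ and let $n_t\le K-1$ be the number of lions of $\pi$ in $V_H$ at time $t$. Let $x_t$ and $e_t$ be the numbers of lions leaving and entering $V_H$ at step $t$; these moves necessarily go through $r$.
\begin{itemize}
\item Take $K-1$ lions in $H$. At time $0$, $n_0$ of them copy the lions of $\pi$ inside $V_H$, and the other $K-1-n_0$ form a pool waiting at $r$.
\item A copy whose original leaves $V_H$ was at $r$, so it simply stays at $r$ and joins the pool.
\item A lion of $\pi$ entering at step $t$ is represented by a pool lion, which stays at $r$; this is a legal move.
\end{itemize}
The pool never runs dry. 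From $n_t=n_{t-1}-x_t+e_t\le K-1$ we get $e_t\le (K-1-n_{t-1})+x_t$, which is exactly the number of pool lions available at $r$ during step $t$. The pool size afterwards is again $K-1-n_t$. Lions that start outside $V_H$ and enter without ever having left are covered by the initial pool.

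The resulting path on $H$ occupies a superset of $L_t\cap V_H$ at every time. It traverses the same edges inside $H$; isometry makes $H$ induced, so every $G$-move between $H$-vertices uses an $H$-edge. Your Step 2 comparison therefore shows it clears $H$ with $K-1$ lions, which is the desired contradiction.
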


\begin{proof}
Suppose we have a path $\pi$ that clears $G$ without ever having $K=\mathcal{L}(H)$ lions on $V_H$. Let vertex $v$ be the only vertex in $\partial(V_H)$. Now, let us apply the remote clear operation on $V_G\setminus V_H$ at every time step $t$. By \cref{lemma:remote_clear_lemma}, we can guarantee that this path can also clear $G$. Next, we modify every lion leaving $V_H$ through $v$ to stay at $v$, and remove any excess lions on $v$ so that the total number of lions on the path $\pi$ remains at most $K-1$. Since $\pi$ never allows more than $K-1$ lions on $V_H$, there must exist enough lions to remove at the vertex $v$. Since $V_G\setminus V_H$ always remains a set of cleared vertices, this modified path can also clear the graph $G$. Finally, by \cref{lemma:constant_remote_lemma}, the path $\pi'$ that contains the modified trajectory of lions without remote clear operations must clear the graph $H$, which is a contradiction to $\mathcal{L}(H)=K$.
\end{proof}

\begin{lemma}\label{lemma:instant_contamination_lemma}
Let $\pi$ be a path that clears the graph $G$. For any time step $t$, if a completely contaminated separating set $S \subseteq W_t$ divides graph $G$ and one of its resulting connected components $H$ does not contain any lions, modifying the path $\pi$ to $\pi'$ by adding a remote contamination to every vertex in $H$ at time step $t$ also clears $G$.
\end{lemma}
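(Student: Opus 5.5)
The plan is to show that adding the remote contaminations does not change the contaminated set at time step $t+1$. After that, the two paths coincide, and \cref{lemma:contamination_subset_lemma} finishes the argument.

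Let $W_t$ and $W'_t$ be the contaminated sets under $\pi$ and $\pi'$. At time step $t$ we have $W'_t = W_t \cup V_H$, and $\pi_{t+k}=\pi'_{t+k}$ for all $k\ge 1$. The key step is to prove $W'_{t+1}=W_{t+1}$. Then \cref{lemma:contamination_subset_lemma}, applied in both directions (each of $W_{t+1}$ and $W'_{t+1}$ is a subset of the other, and the subsequent moves agree), gives $W'_{t+k}=W_{t+k}$ for all $k\ge 1$. Since $\pi$ clears $G$ at some $T$, and we may assume $T>t$ because once $W_T=\emptyset$ later steps keep it empty, $\pi'$ clears $G$ as well. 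The inclusion $W_{t+1}\subseteq W'_{t+1}$ is immediate from \cref{lemma:contamination_subset_lemma}, so only the reverse inclusion needs work.

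For the reverse inclusion, take a vertex $v\in W'_{t+1}$. There are two cases.

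\textbf{Case 1: $v\notin L_{t+1}$ and $v\in W'_t$.} If $v\in W_t$, then $v\in W_{t+1}$ and we are done. Otherwise $v\in V_H\setminus W_t$. Because $H$ has no lions at time $t$, every lion present at time $t+1$ on a vertex of $V_H$ arrived from $V_H\cup S$. I expect that a vertex of $H$ that is unoccupied at $t+1$ becomes contaminated under $\pi$ as well, because contamination propagates from the contaminated separator $S$ through the lion-free component $H$. The trouble is that propagation happens only one edge per step, so a vertex of $H$ far from $S$ that is clear at time $t$ stays clear at $t+1$ under $\pi$. Pointwise equality at $t+1$ therefore fails in general, and the argument has to be reorganized.

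\textbf{The reorganization.} Instead of equality at $t+1$, I will argue that the region $H$ is eventually forced into the same state. Let $d$ be the maximum distance within $H\cup S$ from $S$ to a vertex of $H$. The claim is that $\pi$'s own dynamics contaminate every vertex of $H$ that is not visited by a lion, one BFS layer per step, so no vertex of $H$ can ever be cleared under $\pi'$ but remain contaminated under $\pi$. To carry this out, I will consider the last moment a lion enters each vertex of $H$ and argue by induction on distance from $S$. The basis is that $S\subseteq W_t$. The inductive claim is: a vertex $u\in H$ at distance $j$ is in $W_{t+j}$ unless a lion has visited it (or guarded the edge) during the interval. Whenever a lion has visited $u$, the extra contamination in $\pi'$ has already been wiped out identically in both runs.

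\textbf{Main obstacle.} I expect the main obstacle to be the lion-guarded edges. A lion may sit on an $S$-vertex or cross an edge and thereby block spread under $\pi$, while under $\pi'$ the remote contamination bypasses that block. This is exactly where the hypothesis that $S$ is entirely contaminated and separating must be used: every path from the remote-contaminated region to the rest of $G$ passes through $S$, and $S$ already carries contamination in both runs. The extra contamination in $H$ can therefore only affect vertices outside $H$ through $S$, which it cannot make "more contaminated". Inside $H$, I would compare the first time each vertex is cleared by a lion, and check that the two runs agree from that moment on. This gives $W'_{T'}\subseteq W_{T'}$ for a suitably large $T'$, and hence $\pi'$ clears $G$.
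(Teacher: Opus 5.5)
\textbf{Verdict: there is a genuine gap.} You are right that equality at time $t+1$ fails, and your two-sided use of \cref{lemma:contamination_subset_lemma} after the runs coincide is fine. The problem is that the reorganization leaves open exactly the case that has to be excluded.

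Your inductive claim says a vertex at distance $j$ from $S$ lies in $W_{t+j}$ \emph{unless} a lion has visited it or guarded an edge. You dispose of that case by asserting that the extra contamination is then ``wiped out identically in both runs'' and that the runs ``agree from that moment on''. This is false in general. A lion visit makes a vertex clear in both runs at one instant, but its later state depends on its neighbors, and these may still differ: a deeper neighbor can be clear under $\pi$ and contaminated under $\pi'$.

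Your main-obstacle paragraph has the same problem. $S$ is contaminated in both runs only at time $t$. Once a lion steps onto $S$, whether the extra contamination in $H$ can flow back out is precisely what must be proved. Also, the inclusion you announce (no vertex cleared under $\pi'$ but contaminated under $\pi$) is the trivial direction, given by \cref{lemma:contamination_subset_lemma}. You need $W'\subseteq W$ eventually.

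The missing idea is a speed comparison that makes your escape clause vacuous. At time $t$ every lion lies in $V_G\setminus(V_H\cup S)$: not in $H$ by hypothesis, and not in $S$ because $S\subseteq W_t$ while occupied vertices are cleared. Let $D_j$ be the vertices of $H$ at distance $j$ from $S$, with $D_0=S$. Because $S$ separates, a lion needs at least $j+1$ steps to reach $D_j$. So at time $t+j$ no lion is on $D_j$. Also, in step $t+j$ no lion traverses an edge between $D_{j-1}$ and $D_j$, since it would have to be on $D_{j-1}\cup D_j$ at time $t+j-1$. Hence $D_j\subseteq W_{t+j}$ holds unconditionally under $\pi$, and therefore also under $\pi'$.

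Now prove by induction on $j$ that $W_{t+j}$ and $W'_{t+j}$ agree on $(V_G\setminus V_H)\cup D_1\cup\dots\cup D_j$:
\begin{itemize}
\item A vertex of $D_j$ is contaminated in both runs at time $t+j$. So at time $t+j+1$ it is contaminated if and only if it is unoccupied, regardless of the deeper layers where the runs may differ.
\item Every other vertex of this set has all its neighbors inside the set, so its state is determined by vertices where the runs already agree.
\item $D_{j+1}$ is fully contaminated in both runs at time $t+j+1$.
\end{itemize}
At $j=d$ the runs coincide. Since an empty contaminated set stays empty, you may take the clearing time of $\pi$ to be at least $t+d$, and $\pi'$ clears $G$ at the same time. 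This layer-by-layer agreement, driven by the fact that lions are always at least one step behind the contamination front, is how the paper's proof proceeds.
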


\begin{proof}
Let $W_t$ and $W'_t$ be the contaminated sets under paths $\pi$ and $\pi'$ at time step $t$, respectively. Let $D_i$ denote the set of vertices in $V_H$ whose shortest distance from $S$ is $i$, $D_0$ denote $S$, and let $D^*$ denote the set of all vertices in $V_G \setminus (V_H \cup S)$. By construction, the contamination status of every vertex in $D^*$ and $D_0$ is identical under both paths $\pi$ and $\pi'$ at time step $t$ (i.e., $W_t \cap (D^* \cup D_0) = W'_t \cap (D^* \cup D_0)$).

Assume inductively that the contamination status of vertices in $D^*, D_0, \dots, D_n$ remains identical under both paths at time step $t+n$, and that $D_n$ is fully contaminated ($D_n \subseteq W_{t+n}$). Since all lions are initially outside $H$ and cannot be on the fully contaminated set $S$, the distance between $D_{n+1}$ and any lion at time $t+n$ is strictly greater than 1. Thus, the contamination spreads from $D_n$, and every vertex in $D_{n+1}$ will become contaminated at time step $t+n+1$. Thus, $W_{t+n+1} \cap D_{n+1} = W'_{t+n+1} \cap D_{n+1} = D_{n+1}$, making their contamination status identical.

Furthermore, since every vertex in $D_n$ is fully contaminated at time step $t+n$, any contamination spreading from $N(D_n)=D_{n-1}\cup D_{n+1}$ to $D_n$ does not alter the set of contaminated vertices in $D_n$; instead, only lions moving into $D_n$ can change vertices from contaminated to cleared. Since the lion movements in $\pi$ and $\pi'$ are identical, the set of contaminated vertices in $D_n$ remains identical under both paths at time step $t+n+1$ (i.e., $W_{t+n+1} \cap D_n = W'_{t+n+1} \cap D_n$).

By the definition of the problem, the cleared and contaminated sets are affected only by lion movements and the contamination status of their closed neighborhoods. Since the sets of cleared and contaminated vertices from $N(D^*)$ through $N(D_{n-1})$ are identical at time step $t+n$ and the lion trajectories are exactly the same, the contamination status of vertices in $D^*$ through $D_{n-1}$ also remains identical at time step $t+n+1$. It follows that $D^*, D_0, D_1, \dots , D_{n+1}$ remain identical under both paths at time step $t+n+1$.

Thus, by mathematical induction, we show that after a sufficient number of time steps, the exact partition of $V_G$ into cleared and contaminated sets will become identical under both paths. Since $\pi$ clears $G$, $\pi'$ must also clear $G$.
\end{proof}

\begin{lemma}\label{lemma:lion_lower_bound_lemma}
Given a tree $T$ and an integer $k \ge 1$, $\mathcal{L}(T) \ge k+1$ if and only if there exists a vertex $v$ such that $T - v$ has at least three connected components $T_1, T_2, T_3$ satisfying $\mathcal{L}(T_i) \ge k$ for $i\in\{1, 2, 3\}$.
\end{lemma}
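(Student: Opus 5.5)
The plan is to prove the two directions separately, with ($\Leftarrow$) as a lower-bound argument and ($\Rightarrow$) as an explicit clearing strategy. Both directions follow the classical Parsons/Ellis--Sudborough--Turner characterization of tree pathwidth and node search number. The basic tools are \cref{lemma:subtree_on-graph_lion_lemma}, \cref{lemma:instant_contamination_lemma}, and \cref{theorem:isometric_subgraph_theorem}.

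\textbf{($\Leftarrow$).} Suppose $v$ and $T_1,T_2,T_3$ exist with $\mathcal{L}(T_i)\ge k$, and suppose for contradiction that a path $\pi$ clears $T$ with only $k$ lions. Each $T_i$ is an isometric subgraph of $T$. Moreover, $\partial(V_{T_i})$ is the single neighbor $u_i$ of $v$ in $T_i$. So \cref{lemma:subtree_on-graph_lion_lemma} applies and gives, for each $i$, a time $t_i$ at which all $k$ lions lie inside $T_i$. Choose $t_i$ to be the \emph{last} such time before $T$ is cleared, and order the indices so that $t_1<t_2<t_3$.

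At time $t_2$, all lions are in $T_2$, so $v$ and all of $T_1\cup T_3$ are lion-free. I will argue that at this moment $v$, or some vertex of $T_1$, is contaminated. If $T_1\cup\{v\}$ were entirely clear, the earlier full-occupation episode at $t_1$ would be unnecessary, and I would have to re-derive a contradiction with $\mathcal{L}(T_1)\ge k$. Once contamination is present, it spreads through the lion-free $v$ and reaches $T_3$. The contaminated $v$ then acts as a separating set isolating the lion-free component. Applying \cref{lemma:instant_contamination_lemma}, we may remote-contaminate all of $T_3$ (or $T_1$) at time $t_2$ without affecting whether $\pi$ clears $T$. After that, the lions must later clear that component from scratch, which forces a later time with all $k$ lions inside it. For $T_3$ this is consistent, but for $T_1$ it contradicts the choice of $t_1$ as the last such time.

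The delicate part is the case analysis on where contamination sits at time $t_2$. The cleanest route is this: between $t_1$ and $t_2$ the lions move from $T_1$ to $T_2$ through $v$, so at the moment the last lion leaves $T_1$, either $T_1$ is recontaminated via $v$ or $v$ is guarded. Since all lions end up in $T_2$, $v$ becomes unguarded. Then at least one of $T_1$ or $T_3$ must be fully recontaminated, because both cannot be clear while $T_2$ still needs $k$ lions; here I would invoke \cref{lemma:neighboring_contamination_component_lemma}. This is the step I expect to be the main obstacle.

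\textbf{($\Rightarrow$).} Contrapositively, suppose every vertex $v$ has at most two components $T-v$ with $\mathcal{L}\ge k$. I will show $\mathcal{L}(T)\le k$ by induction on $k$, mirroring the path-decomposition construction for trees. First, find a path $P=(p_1,\dots,p_m)$ such that every component of $T-V_P$ needs at most $k-1$ lions. This is obtained by walking from a vertex along the at-most-two ``heavy'' branches in each direction; the hypothesis guarantees the walk never branches. The strategy then parks one lion at $p_1$. Using the remaining $k-1$ lions, it clears each light subtree hanging off $p_1$, which is possible since these hang off a single guarded vertex and \cref{lemma:constant_remote_lemma} lets us treat them in isolation. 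It then steps the guard from $p_j$ to $p_{j+1}$ and repeats.

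Because $p_j$ separates the cleared part from the rest, recontamination cannot pass, so this yields $\mathcal{L}(T)\le k$. One point needs care: the guard's move from $p_j$ to $p_{j+1}$ must not let contamination into the cleared side. This is handled because the edge is traversed, and the light subtrees at $p_{j+1}$ are cleared only after the guard arrives.
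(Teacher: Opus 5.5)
\textbf{($\Rightarrow$) direction.} This is the paper's strategy in a cleaner form. The paper also sweeps a single guard along a spine, from the deepest heavy vertex $r_1'$ of $T_1$ up to $v$ and then down the unique heavy chain of $T_2$, while $k-1$ helpers clear the light subtrees hanging off it. Your formulation has the advantage of not needing the paper's normalization that heavy branches have $\mathcal{L}=k$ exactly. One detail needs fixing: start the walk at a vertex with two heavy branches whenever one exists. Then, by \cref{theorem:isometric_subgraph_theorem}, the branch pointing back toward the start is heavy at every later vertex, which leaves room for at most one forward heavy branch. From an arbitrary start the walk can branch. For example, take a leaf $x$ adjacent to a vertex $y$ that has two heavy branches away from $x$.

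\textbf{($\Leftarrow$) direction.} The genuine gap is here, at the step you flag. To apply \cref{lemma:instant_contamination_lemma} at time $t_2$ with separator $\{v\}$ and lion-free component $T_1$, you need $v\in W_{t_2}$, and none of your sketched arguments gives this.
\begin{itemize}
\item That $T_1\cup\{v\}$ is clean at $t_2$ contradicts nothing. A strategy may well clear $T_1$ with all $k$ lions at $t_1$ and keep it clean afterwards.
\item ``Both cannot be clear while $T_2$ still needs $k$ lions'' is a non sequitur.
\item Contamination that \emph{eventually} spreads through $v$ does not place contamination at $v$ at a time when $T_1$ is lion-free.
\end{itemize}

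The missing idea, which is how the paper closes this step, is to look at the side that is still unfinished and apply \cref{lemma:neighboring_contamination_component_lemma} at time $t_2$ itself. The paper argues that, since $t_3>t_2$, some contamination remains in $T-T_2$ at time $t_2$. Let $S$ be the maximal contaminated component containing it. Then $N(S)\subseteq C_{t_2}$, so $N(S)$ must contain a lion. All lions lie in $T_2$, and $v$ is the only vertex outside $T_2$ adjacent to $T_2$. Hence $v\in S\subseteq W_{t_2}$, and your concluding argument (remote-contaminate $T_1$, force a later full occupation of $T_1$, contradict the choice of $t_1$) goes through as written.

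Be aware that the paper only asserts that $t_3>t_2$ leaves contamination in $T-T_2$. Defining $t_3$ as a \emph{last} full-occupation time does not by itself exclude that $T-T_2$ is already clean at $t_2$. A watertight write-up must also justify this, for instance by choosing the times $t_i$ more carefully.
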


\begin{proof}
By \cref{theorem:isometric_subgraph_theorem}, if there exists at least one subtree $T'$ satisfying $\mathcal{L}(T') \ge k+1$, then $\mathcal{L}(T) \ge k+1$ holds. Thus, for the remainder of the proof, we assume that for every connected component $T'$ where $\mathcal{L}(T') \ge k$, we have exactly $\mathcal{L}(T') = k$.

\proofsubparagraph*{Forward Direction ($\mathcal{L}(T) \le k$ when the condition fails)}
Since the condition fails, for every vertex $v$, the induced subgraph $T - v$ has at most two connected components requiring $k$ lions. Now, we choose a vertex $v$ in $T$ to maximize $\mathcal{L}(T_2)$, where $T_1, T_2, \dots$ are the connected components of $T - v$ ordered such that $\mathcal{L}(T_1)\geq \mathcal{L}(T_2) \geq \dots$. If $\mathcal{L}(T_1)<k$, then it immediately follows that $\mathcal{L}(T)\leq k$ since placing a lion at $v$ and clearing every other component with $k-1$ lions sequentially would clear $T$. Thus, we can assume $\mathcal{L}(T_1)=k$.

Now, let $r_1$ be the root of $T_1$, and let $T'$ be a component that satisfies $T'\subseteq T_1 - r_1$ and $\mathcal{L}(T')=k$. If there is no such $T'$, then it is straightforward to clear $T_1$ by placing a lion on $r_1$ and using $k-1$ lions on each component in $T_1- r_1$. Furthermore, it is evident that no other component in $T_1$ requires $k$ lions since we already have $\mathcal{L}(T_2)$ maximized and at most two connected components require $k$ lions. By recursively applying this procedure, since $T$ is a finite tree, we can obtain $r_1'$ such that no subtree rooted at the children of $r_1'$ requires $k$ lions, while the subtree rooted at $r_1'$ requires $k$ lions. We apply the following procedure: starting with a lion on $r_1'$, we use $k-1$ lions to clear its subtrees. We then iteratively move the lion to its parent, clearing the adjacent contaminated subtrees with the remaining $k-1$ lions at each step, proceeding upward until $T_1$ is entirely cleared. Finally, moving every lion to $v$ clears $T - T_2$.

If $\mathcal{L}(T_2) < k$, we can simply place one lion at $v$ and use the remaining $k-1$ lions to entirely clear $T_2$. Now, assume $\mathcal{L}(T_2) = k$. Let $r_2$ be the root of $T_2$. We begin by placing a lion at $r_2$ and sequentially clearing every adjacent subtree that requires strictly fewer than $k$ lions. Note that the lion stationed at $r_2$ acts as a blocker, preventing contamination from spreading between the subtrees during this process. Furthermore, by our given condition, there exists at most one subtree $T'$ rooted at a child of $r_2$ satisfying $\mathcal{L}(T') = k$. If such a subtree $T'$ exists, we move the lion to its root and recurse. Otherwise, the procedure terminates and $T_2$ is completely cleared. Since no recontamination occurs in $T - T_2$ throughout this process, the entire tree $T$ is cleared, which implies $\mathcal{L}(T)\leq k$.

\proofsubparagraph*{Backward Direction ($\mathcal{L}(T) \ge k+1$ when the condition holds)}
Since $|\partial(T_i)|=1$ and $T_i$ is an isometric subgraph of $T$ for each component $i \in \{1, 2, 3\}$, we can apply \cref{lemma:subtree_on-graph_lion_lemma}; thus, there must be at least one time step where all $k$ lions are present entirely within $T_i$. Let $t_i$ be the \emph{last} time step during which all $k$ lions are located within $T_i$. Since we only have $k$ lions, these time steps must be distinct. Without loss of generality, assume $t_1 < t_2 < t_3$.
At time step $t_2$, since $t_3 > t_2$, there is still a contaminated component in $T - T_2$. Because all $k$ lions are entirely within $T_2$ at $t_2$, \cref{lemma:neighboring_contamination_component_lemma} requires this contamination to be adjacent to a lion in $T_2$. Thus, $N(T_2)$ must be contaminated.
By \cref{lemma:instant_contamination_lemma}, since $N(T_2)$ acts as a contaminated separating set, we can remotely contaminate every vertex of $T - T_2$, which includes $T_1$, at time step $t_2$, and the path must still successfully clear the graph. This implies that the path must clear $T_1$ from a fully contaminated state \emph{after} time $t_2$, which requires placing all $k$ lions in $T_1$ at some time $t > t_2$. This directly contradicts the definition of $t_1$ being the \emph{last} time step for $T_1$. Therefore, $k$ lions are insufficient, and $\mathcal{L}(T) \ge k+1$.
\end{proof}

Then, we use following lemma from \cite{takahashi1994minimal} to prove the main theorem:

\begin{lemma}[Lemma 2.10 of \cite{takahashi1994minimal}]\label{lemma:pathwidth_lemma}
For any tree $T$ and integer $k \geq 1$, $\operatorname{pw}(T) \geq k+1$ if and only if $T$ has a vertex $v$ such that $T - v$ has at least three connected components with pathwidth $k$ or more.
\end{lemma}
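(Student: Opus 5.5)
We prove the two directions separately. The backward direction uses the interval structure of path decompositions. The forward direction is an explicit \emph{spine} construction, preceded by a minimal-counterexample reduction.

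\textbf{Backward direction} (three components of pathwidth $\ge k$ imply $\operatorname{pw}(T)\ge k+1$). Let $T_1,T_2,T_3$ be the three components of $T-v$ with $\operatorname{pw}(T_i)\ge k$. Suppose, for contradiction, that $\mathcal{B}=(\mathcal{B}_1,\dots,\mathcal{B}_n)$ is a path decomposition of $T$ of width at most $k$.
\begin{enumerate}[(1)]
  \item Restricting $\mathcal{B}$ to $V_{T_i}$ gives a path decomposition of $T_i$. Since $\operatorname{pw}(T_i)\ge k$, some index $a_i$ satisfies $|\mathcal{B}_{a_i}\cap V_{T_i}|\ge k+1$.
  \item Relabel so that $a_1<a_2<a_3$. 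The indices are distinct, because a bag has at most $k+1$ vertices and the $T_i$ are disjoint.
  \item We use the standard fact, which follows from \cref{prop:pw-basics}(\ref{prop:pw-basics:ii}) and (\ref{prop:pw-basics:iii}), that the bags meeting a connected subgraph form a contiguous interval of indices.
  \item Apply this to the connected subgraph $T_1\cup\{v\}\cup T_3$. It meets $\mathcal{B}_{a_1}$ and $\mathcal{B}_{a_3}$, so it also meets $\mathcal{B}_{a_2}$.
  \item Hence $|\mathcal{B}_{a_2}|\ge k+2$, contradicting width at most $k$.
\end{enumerate}

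\textbf{Forward direction} (if no such vertex exists, then $\operatorname{pw}(T)\le k$). We argue by minimal counterexample on $|T|$. Suppose some component $T'$ of some $T-u$ has $\operatorname{pw}(T')\ge k+1$. By minimality, there is a vertex $x\in T'$ such that $T'-x$ has three components of pathwidth $\ge k+1\ge k$. At most one of them contains the neighbour of $u$; enlarge that one to the component of $T-x$ containing $u$. Pathwidth is monotone under subgraphs, so $T-x$ also has three components of pathwidth $\ge k$, a contradiction.

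Thus we may assume every component of every $T-u$ has pathwidth at most $k$. Call such a component \emph{heavy} if its pathwidth is exactly $k$. By hypothesis, each $T-u$ has at most two heavy components.

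The key step is to find a path $P=(p_1,\dots,p_m)$ in $T$ such that every component of $T-V_P$ has pathwidth at most $k-1$. I would construct $P$ by mirroring the forward direction of \cref{lemma:lion_lower_bound_lemma}.
\begin{enumerate}[(1)]
  \item Choose $v$ maximizing the pathwidth of the second-heaviest component of $T-v$.
  \item Descend from $v$ into each heavy component (at most two of them). At each step, move into the unique heavy child branch if it exists, and stop otherwise.
  \item Uniqueness of the heavy child branch must be justified. Two heavy child branches, together with the branch containing the other heavy component, would give three heavy components at one vertex. If the other heavy component does not exist, the maximality in the choice of $v$ rules out two heavy child branches.
  \item The two descending walks, together with $v$, form the path $P$.
\end{enumerate}

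Given $P$, build the decomposition as follows. For each $p_j$, take width-$(k-1)$ decompositions of the light components hanging at $p_j$, add $p_j$ to every bag, and concatenate them. Between consecutive blocks insert the bag $\{p_j,p_{j+1}\}$. All bags have size at most $k+1$. The conditions of \cref{prop:pw-basics} hold because each $p_j$ appears only in a contiguous stretch of bags. Hence $\operatorname{pw}(T)\le k$.

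\textbf{Main obstacle.} I expect the spine-existence argument in the forward direction to be the hardest part. The difficulty is showing that the descent never branches. That requires handling carefully how heavy components of $T-p_j$ relate to heavy components of $T-v$, using the maximality choice of $v$ together with the no-three-heavy-components hypothesis.
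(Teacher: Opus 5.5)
The paper gives no proof of this statement. It imports it verbatim as Lemma 2.10 of \cite{takahashi1994minimal} and uses it as a black box in \cref{theorem:pathwidth_theorem}. Your proposal instead supplies a self-contained proof, and it is correct. The backward direction is the classical interval argument: the indices $a_1<a_2<a_3$ are forced to be distinct, and the connected set $T_1\cup\{v\}\cup T_3$ must then meet $\mathcal{B}_{a_2}$. The forward direction is the classical spine decomposition for trees.

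Compared with the citation, your route makes the paper independent of the external reference. It also exposes that the pathwidth recursion and the lion recursion of \cref{lemma:lion_lower_bound_lemma} rest on the same two arguments:
\begin{itemize}
\item Your ordering of bag indices $a_1<a_2<a_3$ corresponds to the ordering $t_1<t_2<t_3$ of last times.
\item Your spine descent corresponds to the lion sweeping strategy.
\end{itemize}
This parallel is really the reason \cref{theorem:pathwidth_theorem} holds. The citation buys only brevity.

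A few small points.
\begin{itemize}
\item In the reduction step, minimality applied to $T'$ gives three components of $T'-x$ with pathwidth $\ge k$, not $\ge k+1$. You only use $\ge k$, so this slip is harmless.
\item The reduction is actually unnecessary. Call a component heavy when its pathwidth is $\ge k$, rather than exactly $k$. Then non-heavy components automatically have pathwidth $\le k-1$. If $T-v$ has two heavy components, the component of $T-p$ containing $v$ contains the other heavy one, so it is heavy by subgraph monotonicity. A second heavy child branch at $p$ would therefore give three heavy components. If no vertex has two heavy components, there is nothing to check. So the maximality choice of $v$ can be replaced by: take a vertex with two heavy components if one exists, otherwise any vertex.
\item The ``main obstacle'' you flag is already fully resolved by your step (3).
\item The connector bags $\{p_j,p_{j+1}\}$ fit within width $k$ exactly because $k\ge 1$. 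That is where this hypothesis enters; for $k=0$ the statement fails already on a single edge.
\end{itemize}
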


\begin{theorem}\label{theorem:pathwidth_theorem}
For a tree $T$, $\operatorname{pw}(T) \leq \mathcal{L}(T) \leq \operatorname{pw}(T)+1$.
\end{theorem}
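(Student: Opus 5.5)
We prove both inequalities at once by induction on $k$, comparing the recursive characterization of $\mathcal{L}(T)$ in \cref{lemma:lion_lower_bound_lemma} with that of $\operatorname{pw}(T)$ in \cref{lemma:pathwidth_lemma}. Both characterizations use the same ``branch vertex with three heavy components'' structure. The natural statement is therefore that for every integer $k\ge 0$ and every tree $T$,
\[
\operatorname{pw}(T)\ge k \implies \mathcal{L}(T)\ge k \quad\text{and}\quad \mathcal{L}(T)\ge k+2 \implies \operatorname{pw}(T)\ge k+1 .
\]
The first implication gives the lower bound $\operatorname{pw}(T)\le\mathcal{L}(T)$. The second, applied with $k=\operatorname{pw}(T)$, gives the upper bound $\mathcal{L}(T)\le\operatorname{pw}(T)+1$.

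\textbf{Base cases.} For the lower bound with $k=0$ there is nothing to prove. For $k=1$, any nonempty tree needs at least one lion, since initially every vertex not occupied by a lion is contaminated. For the upper bound, a tree with $\operatorname{pw}(T)=0$ has no edges, so it is a single vertex and one lion suffices. A tree with $\operatorname{pw}(T)=1$ is a caterpillar. Two lions clear it by walking one lion along the spine while the second lion visits each leaf hanging off the current spine vertex and returns; the spine lion blocks recontamination from behind. (Alternatively, for this case one can argue directly via \cref{lemma:lion_lower_bound_lemma} with $k=2$.)

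\textbf{Inductive step for the lower bound.} Suppose $\operatorname{pw}(T)\ge k+1$ with $k\ge1$. By \cref{lemma:pathwidth_lemma} there is a vertex $v$ such that $T-v$ has three components $T_1,T_2,T_3$ with $\operatorname{pw}(T_i)\ge k$. The induction hypothesis gives $\mathcal{L}(T_i)\ge k$, and \cref{lemma:lion_lower_bound_lemma} then yields $\mathcal{L}(T)\ge k+1$.

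\textbf{Inductive step for the upper bound.} Suppose $\mathcal{L}(T)\ge k+2$ with $k+1\ge1$. By \cref{lemma:lion_lower_bound_lemma} there is a vertex $v$ such that $T-v$ has three components with $\mathcal{L}(T_i)\ge k+1$. The induction hypothesis (the upper-bound statement at $k-1$) gives $\operatorname{pw}(T_i)\ge k$. Then \cref{lemma:pathwidth_lemma} gives $\operatorname{pw}(T)\ge k+1$, provided $k\ge1$.

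The remaining case is $k=0$, i.e. $\mathcal{L}(T)\ge2$ should force $\operatorname{pw}(T)\ge1$. This holds because a tree with pathwidth $0$ is a single vertex, which one lion clears.

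The main obstacle is bookkeeping rather than new ideas: the two cited lemmas require $k\ge1$, so the small cases must be settled separately and the induction indices must align. In particular, the upper bound's induction has to apply \cref{lemma:lion_lower_bound_lemma} at level $k+1$ and \cref{lemma:pathwidth_lemma} at level $k$. I would check carefully that the caterpillar case, or equivalently the step $\mathcal{L}(T)\ge 3\Rightarrow \operatorname{pw}(T)\ge2$ obtained from the $k=1$ instance of the induction, is correctly covered. That instance needs $\mathcal{L}(T_i)\ge2\Rightarrow\operatorname{pw}(T_i)\ge1$, which is exactly the $k=0$ case just verified.
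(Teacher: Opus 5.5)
Your proposal is correct and is essentially the paper's proof: both bounds come from passing the inductive inequality on the components $T_1,T_2,T_3$ of $T-v$ through \cref{lemma:lion_lower_bound_lemma} and \cref{lemma:pathwidth_lemma}. Your lower-bound step is the contrapositive of the paper's first step, and your upper-bound step is its second step with the index shifted. The differences are only bookkeeping: you induct on the threshold $k$ rather than on $|V_T|$, and you settle the small cases explicitly, where the paper silently applies the lemmas outside the range $k\ge 1$; your caterpillar strategy is redundant, since the $k=0$ case already drives the induction.
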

\begin{proof}
We proceed by strong induction on the number of vertices $|V_T|=n$ in the tree $T$. For the base case of $n=1$, we inherently have $\operatorname{pw}(T)=0$ and $\mathcal{L}(T)=1$, thus $\operatorname{pw}(T)\leq \mathcal{L}(T)\leq \operatorname{pw}(T)+1$ holds. Assume as the inductive hypothesis that $\operatorname{pw}(T') \leq \mathcal{L}(T') \leq \operatorname{pw}(T')+1$ holds for every tree $T'$ with $|V(T')|\leq n$. Now, consider a tree $T$ with $|V_T|=n+1$ and let $\mathcal{L}(T)=k$.

First, since $\mathcal{L}(T)=k < k+1$, the contrapositive of \cref{lemma:lion_lower_bound_lemma} states that for every vertex $u \in V_T$, $T - u$ has at most two connected components with $\mathcal{L}(T_i) \geq k$. Since the inductive hypothesis suggests $\operatorname{pw}(T_i) \leq \mathcal{L}(T_i)$, $T - u$ must also have at most two connected components with $\operatorname{pw}(T_i) \geq k$. By \cref{lemma:pathwidth_lemma}, this implies $\operatorname{pw}(T) \leq k$.

Second, since $\mathcal{L}(T) \geq k$, by \cref{lemma:lion_lower_bound_lemma}, there exists a vertex $v$ such that $T -v$ has at least three connected components $T_1, T_2, T_3$ satisfying $\mathcal{L}(T_i) \geq k-1$. The inductive hypothesis states $\mathcal{L}(T_i) \leq \operatorname{pw}(T_i)+1$, which gives $\operatorname{pw}(T_i) \geq \mathcal{L}(T_i)-1 \geq k-2$. Since $T -v$ has at least three components with $\operatorname{pw}(T_i) \geq k-2$, \cref{lemma:pathwidth_lemma} implies that $\operatorname{pw}(T) \geq (k-2)+1 = k-1$.

Combining these two inequalities gives us $k-1 \leq \operatorname{pw}(T) \leq k$, which exactly means $\operatorname{pw}(T) \leq \mathcal{L}(T) \leq \operatorname{pw}(T)+1$ holds for $T$.
\end{proof}

\begin{remark}
Since it is already proven that the pathwidth of the complete binary tree with depth $h$ (i.e., a complete binary tree with $2^{h+1}-1$ vertices) equals $\lceil \frac{h}{2} \rceil$ in \cite{bodlaender1998partial} and \cref{lemma:lion_lower_bound_lemma} implies that $\mathcal{L}(T_h)=\mathcal{L}(T_{h-2})+1$, the bound $\operatorname{pw}(T)\leq \mathcal{L}(T)\leq \operatorname{pw}(T)+1$ is tight.
\end{remark}

\section{Non-Monotonicity under Subgraph Inclusion}\label{sec:subgraph_counterexample}
While the lion number is monotonic for isometric subgraphs like trees, it is natural to ask if this property holds generally. In this section, we answer this in the negative. We demonstrate that an arbitrary subgraph may require strictly more lions to be cleared than its supergraph by providing a specific recursive construction.

\begin{figure}[t]
    \centering
    \includegraphics[width=\textwidth]{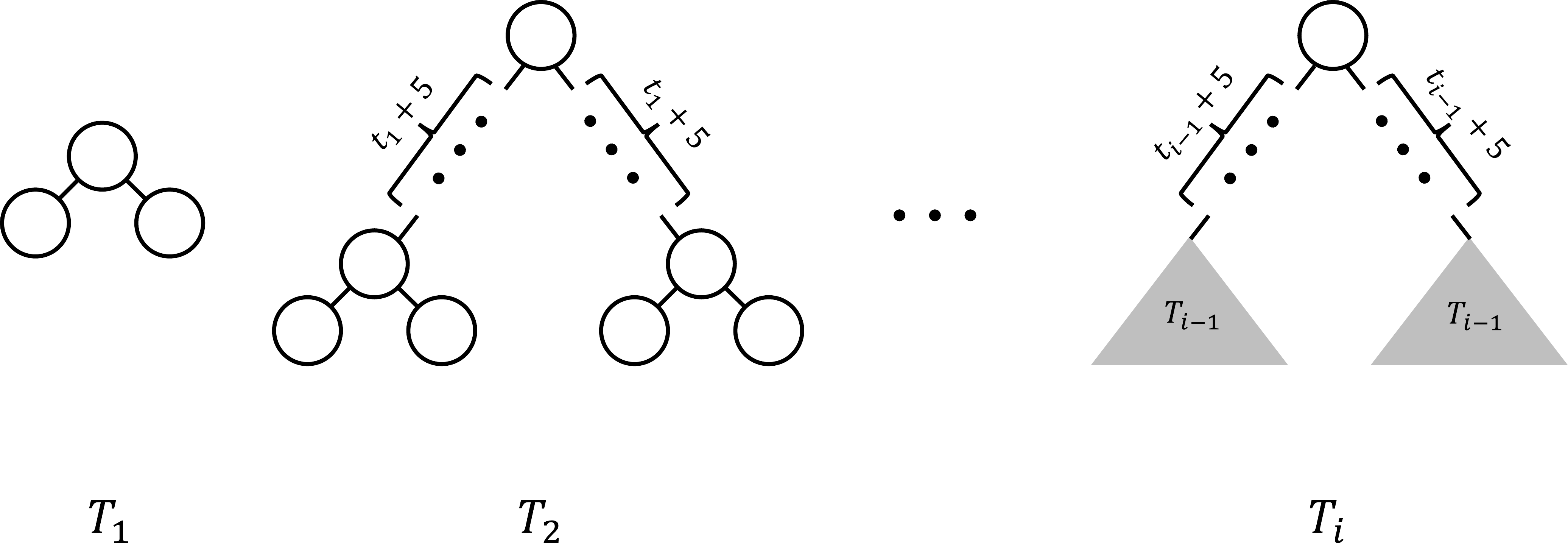}
    \caption{The construction of trees $T_1, T_2, \dots, T_i$.}
    \label{fig:figure}
\end{figure}

\begin{theorem}\label{theorem:subgraph_counterexample}
    The lion number is not monotonic under general subgraph inclusion.
\end{theorem}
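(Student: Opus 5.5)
The plan is to exhibit an explicit pair $H\subseteq G$ with $\mathcal{L}(H)>\mathcal{L}(G)$, where $H$ is a tree. This keeps the lower bound for $H$ entirely within the tree machinery of \cref{sec:tree}, while $G$ is obtained from $H$ by adding edges. Such added edges make $H$ a non-isometric subgraph of $G$, so \cref{theorem:isometric_subgraph_theorem} does not apply and there is no contradiction. I have not yet fixed the exact choice of added edges; what follows is the construction I would try first and how each bound would be argued.

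\textbf{The trees.} I would build the family $T_1,T_2,\dots$ recursively, as in \cref{fig:figure}. Take $T_1$ to be a path, or any small tree with $\mathcal{L}(T_1)=1$. Let $T_{i}$ consist of a new centre vertex $c_i$ joined to the roots of three disjoint copies of $T_{i-1}$; between the centre and each copy I would also insert a long path, whose use is explained below. By \cref{lemma:lion_lower_bound_lemma}, applied inductively with $v=c_i$, we get $\mathcal{L}(T_i)\ge i$. The forward direction of that lemma, or alternatively \cref{theorem:pathwidth_theorem} together with \cref{lemma:pathwidth_lemma}, pins down $\mathcal{L}(T_i)=i$ exactly. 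Hence $H=T_i$ has a known, large lion number.

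\textbf{The supergraph.} I would form $G_i$ by adding shortcut edges to $T_i$. The candidate is to add edges between corresponding vertices of the three copies of $T_{i-1}$, level by level, so that the three branches become one "thick" branch. The intended mechanism is this: in $T_i$ the lions must clear the three heavy branches one after another, whereas in $G_i$ a single sweeping front can clear all three copies simultaneously. That front should be a set of boundary vertices of size $\mathcal{L}(T_{i-1})$ or less, moving in parallel along the identified levels. The long inserted paths are there so that contamination cannot leak around the front through the centre faster than the lions advance.

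\textbf{The upper bound for $G_i$.} I would write down a step-by-step strategy with at most $i-1$ lions, and check invariants of the following form: at time $t$, every contaminated component has its entire neighbourhood guarded by lions. The argument runs in the spirit of \cref{lemma:neighboring_contamination_component_lemma}. I would then apply \cref{lemma:remote_clear_lemma} to justify any slack, for instance to treat already-swept regions as permanently cleared.

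\textbf{The main obstacle.} I expect the hard step to be this upper bound. Added edges also create new routes for contamination, so the strategy may suffer recontamination; the cycle $C_n$ against the path is a warning that adding edges can go the wrong way. If the level-by-level identification gives too many frontier vertices, my fallback is to add fewer edges, for example linking only the roots of the three copies through one shared vertex. With that change I would aim only for $\mathcal{L}(G)\le \mathcal{L}(H)-1$ in the smallest case $i=3$. That single instance, verified by direct simulation, already proves \cref{theorem:subgraph_counterexample}.
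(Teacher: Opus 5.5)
There is a genuine gap. The substance of this theorem is an explicit supergraph $G$, together with a strategy that clears it with fewer lions than the tree needs. Your proposal supplies neither, and both candidates you name fail. Your lower bound $\mathcal{L}(T_i)\ge i$ via \cref{lemma:lion_lower_bound_lemma} and \cref{theorem:isometric_subgraph_theorem} is fine.

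\emph{Your main candidate rests on a wrong mechanism.} Adding edges between corresponding vertices of the three copies does not merge them into one branch. A cleared vertex in any copy that has an uncleared neighbour in its own copy is still exposed and must be guarded. The cross edges only create further exposed pairs. So a front sweeping the three copies in parallel is about three times the size of a front for one copy, not of size $\mathcal{L}(T_{i-1})$. This already shows for paths: one lion clears $P_n$, but three copies of $P_n$ joined level by level contain an isometric cycle, so they need at least two lions.

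\emph{Your fallback fails provably.} Let $w$ be a new vertex adjacent to the roots of the three copies of $T_2$ in $T_3$. Then $w$ together with these copies induces a tree $T'$, and $T'$ is isometric in $G$. Indeed, each copy meets the rest of $G$ only at its root, and the only other route between two roots runs through the long paths and $c_3$, which is not shorter than the route through $w$. Each component of $T'-w$ is a copy of $T_2$, which has a vertex of degree $3$, so $\mathcal{L}(T_2)\ge 2$. Then \cref{lemma:lion_lower_bound_lemma} gives $\mathcal{L}(T')\ge 3$, and \cref{theorem:isometric_subgraph_theorem} gives $\mathcal{L}(G)\ge 3=\mathcal{L}(T_3)$. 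More generally, making $H$ non-isometric is necessary but not sufficient. As long as $G$ keeps an isometric copy of some witness tree (a vertex with three heavy branches), the tree lower bound transfers to $G$, so local edges near the branch points cannot create a gap.

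The missing idea is to add edges that are shortcuts for the lions but not for the contamination, and to exploit non-monotone timing. The paper adds one universal vertex $u$ adjacent to every tree vertex and parks a lion on $u$ for the whole game. Contamination then still travels only along tree edges, one edge per step, while a free lion can reach any tree vertex in two steps. The trees branch binarily: $T_{i+1}$ joins a new root $r_{i+1}$ to two copies of $T_i$ by paths of length $t_i+6$, where $t_i$ is the running time of the recursive strategy. The strategy uses two free lions:
\begin{enumerate}
\item They clear the left copy recursively and walk up to the neighbour of $r_{i+1}$.
\item They abandon the left side via $u$ and clear the right copy recursively.
\item One of them returns via $u$ to the root of the left copy within $t_i+4$ steps, before contamination creeping down from $r_{i+1}$ can reach it.
\item Both slide up their paths and meet at $r_{i+1}$ at the last step.
\end{enumerate}
This gives $\mathcal{L}(G_i)\le 3$ for every $i$. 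Meanwhile $T_i$ is a subdivided complete binary tree of height $i$, so $\mathcal{L}(T_i)\ge\operatorname{pw}(T_i)\ge\lceil i/2\rceil$ by \cref{theorem:pathwidth_theorem}. The separation is therefore bounded versus unbounded, certified for $i>6$. Rather than hunting for a one-lion gap in a small instance, you need a construction in which the supergraph's lion number stays bounded while the tree's grows.
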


\begin{proof}
To demonstrate this, we construct a family of trees $T_i$ and their supergraphs $G_i$ such that the supergraph requires fewer lions than its subgraph. Let $T_1$ be a tree consisting of a root and two leaves, and let $G_1$ be the graph formed by adding a universal vertex $u$ connected to all vertices in $T_1$. We define $\pi^{(1)}$ as the path that clears $G_1$ using exactly $3$ lions. In this strategy, one lion is placed on $u$ and the other two lions occupy the two leaves of $T_1$. In the subsequent time step, the two lions simultaneously slide to the root of $T_1$. This path $\pi^{(1)}$ requires $t_1 = 1$ time step, with the two lions arriving at the root of $T_1$ only at the final step.

We construct $T_{i+1}$ and $G_{i+1}$ inductively by assuming that a path $\pi^{(i)}$ clears $G_i$ using $3$ lions in $t_i$ time steps. We further assume that $\pi^{(i)}$ maintains one lion on the universal vertex $u$ throughout the process, while the other two lions arrive at the root of $T_i$ simultaneously at the last time step. The tree $T_{i+1}$ is constructed with a new root vertex $r_{i+1}$ connected to the roots of two disjoint copies of $T_i$, referred to as $T_L$ and $T_R$, via paths containing $t_i+5$ internal vertices. Consequently, the distance from $r_{i+1}$ to the root of $T_L$ is exactly $t_i+6$ edges. Let $G_{i+1}$ be the supergraph formed by adding a universal vertex $u$ connected to every vertex in $T_{i+1}$. A new path $\pi^{(i+1)}$ clears $G_{i+1}$ using $3$ lions according to the following steps:

\begin{enumerate}
    \item One lion remains at the universal vertex $u$ for the entire duration of the clearing process.
    \item The remaining two lions clear the left subtree $T_L$ by mimicking $\pi^{(i)}$.
    \item The two lions move up the path from the root of $T_L$ until they reach the child of $r_{i+1}$.
    \item Both lions jump to the universal vertex $u$, leaving the left path exposed to the contaminated root $r_{i+1}$.
    \item The lions jump from $u$ to the starting positions of $\pi^{(i)}$ within the right subtree $T_R$.
    \item The path $\pi^{(i)}$ is executed to clear $T_R$ in $t_i$ time steps, concluding with both lions at the root of $T_R$.
    \item One lion moves from the root of $T_R$ to the root of $T_L$ via $u$.
    \item The lions at the roots of $T_L$ and $T_R$ simultaneously slide up their respective paths, arriving at and clearing $r_{i+1}$ at the final time step of the process.
\end{enumerate}

It remains to verify that $T_L$ is not recontaminated during Steps 4 through 7. The total time elapsed between the lions vacating the left path in Step 4 and a lion returning to the root of $T_L$ in Step 7 is exactly $1 + 1 + t_i + 2 = t_i + 4$ time steps. Given that contamination spreads at a rate of one edge per time step, it advances at most $t_i+4$ edges down the left path from $r_{i+1}$. Since the total distance to the root of $T_L$ is $t_i+6$ edges, the contamination cannot reach $T_L$, ensuring it remains cleared. Furthermore, $\pi^{(i+1)}$ preserves the inductive condition where the lions arrive at the root $r_{i+1}$ only at the last time step.

Through this construction, we establish that $G_i$ can be cleared using only $3$ lions for an arbitrarily large $i$. Conversely, since $T_i$ contains a subdivision of a complete binary tree of height $i$, its pathwidth satisfies $\operatorname{pw}(T_i) \geq \lceil \frac{i}{2} \rceil$. According to \cref{theorem:pathwidth_theorem}, $\mathcal{L}(T_i) \geq \operatorname{pw}(T_i) \geq \lceil \frac{i}{2} \rceil$. For any $i > 6$, it follows that $\mathcal{L}(G_i) = 3 < \mathcal{L}(T_i)$. As $T_i$ is a subgraph of $G_i$, this confirms that the addition of edges or vertices to a graph can strictly decrease the required number of lions.
\end{proof}

Before proceeding to general bounds, we remark that this non-monotonicity construction has broader implications. Since the required number of lions closely bounds other pursuit-evasion variants, such as Zero-visibility Cops and Robber, we can apply this counterexample to those models as well. Due to space constraints, we detail this relationship and prove that the zero-visibility cop number is similarly non-monotonic in \cref{sec:appendix_zero}.

\section{Pathwidth Bounds on General Graphs}\label{sec:general_graph}

In this section, we formally demonstrate that $\mathcal{L}(G)$ cannot be lower-bounded by any unbounded function of pathwidth, while its upper bound strictly depends on pathwidth.

\begin{theorem}\label{theorem:pathwidth_does_not_lower_bound_lion_number_in_general_graphs}
For any positive integer $k$, there is a graph $G$ with $\mathcal{L}(G) \leq 3$ and $\operatorname{pw}(G) \geq k$.
\end{theorem}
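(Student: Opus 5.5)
We reuse the graphs $G_i$ built in the proof of \cref{theorem:subgraph_counterexample}. Recall that $G_i$ is obtained from the recursively constructed tree $T_i$ by adding a universal vertex $u$ adjacent to every vertex of $T_i$. That proof exhibits a path $\pi^{(i)}$ clearing $G_i$ with exactly $3$ lions, for every $i\ge 1$. Hence $\mathcal{L}(G_i)\le 3$ holds for all $i$, and the remaining task is to show that $\operatorname{pw}(G_i)$ grows without bound in $i$.

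For the pathwidth bound we use the fact that pathwidth is monotone under taking subgraphs. If $(\mathcal{B}_1,\dots,\mathcal{B}_n)$ is a path decomposition of $G$ and $H$ is a subgraph of $G$, then $(\mathcal{B}_1\cap V_H,\dots,\mathcal{B}_n\cap V_H)$ satisfies conditions (\ref{prop:pw-basics:i})--(\ref{prop:pw-basics:iii}) of \cref{prop:pw-basics} for $H$. Its width is no larger, so $\operatorname{pw}(H)\le\operatorname{pw}(G)$. Since $T_i$ is a subgraph of $G_i$, this gives $\operatorname{pw}(G_i)\ge \operatorname{pw}(T_i)$. (Adding $u$ to every bag of a decomposition of $T_i$ also shows $\operatorname{pw}(G_i)=\operatorname{pw}(T_i)+1$, but only the lower bound is needed.)

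It remains to lower-bound $\operatorname{pw}(T_i)$. By construction, $T_i$ is a subdivision of the complete binary tree of height $i$. We use \cref{lemma:pathwidth_lemma} to show by induction that $\operatorname{pw}(T_i)\ge \lceil i/2\rceil$, or at least that it is unbounded.

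\emph{Induction step.} Consider $T_{i+2}$ and let $v$ be its root. Then $T_{i+2}-v$ has two components, each containing a copy of $T_{i+1}$ attached by a path. Instead, take $v$ to be the root of one of the two copies of $T_{i+1}$ inside $T_{i+2}$. Removing $v$ leaves three components, each containing a subdivided copy of $T_i$:
\begin{itemize}
\item the two child copies of $T_i$ (each together with its connecting path), and
\item the remainder of the tree through the parent path, which contains the other copy of $T_{i+1}$ and hence a copy of $T_i$.
\end{itemize}
Each of these three components has pathwidth at least $\operatorname{pw}(T_i)$ by subgraph monotonicity. Therefore \cref{lemma:pathwidth_lemma} yields $\operatorname{pw}(T_{i+2})\ge \operatorname{pw}(T_i)+1$.

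\emph{Base cases.} We need $\operatorname{pw}(T_1)\ge 1$, which holds because $T_1$ has an edge, and $\operatorname{pw}(T_2)\ge 1$, which holds for the same reason.

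Together these give $\operatorname{pw}(T_i)\ge\lceil i/2\rceil$. Alternatively, one may cite the known value $\lceil h/2\rceil$ for the complete binary tree \cite{bodlaender1998partial}, using that a subdivision contains the original tree as a minor and that pathwidth is minor-monotone.

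Given $k$, we therefore choose $i=2k$ and obtain $\operatorname{pw}(G_{2k})\ge\operatorname{pw}(T_{2k})\ge k$ while $\mathcal{L}(G_{2k})\le 3$. The only delicate point is the induction step, namely checking that the three components of $T_{i+2}-v$ each really contain a subdivided $T_i$. The rest follows from the earlier construction and subgraph monotonicity of pathwidth.
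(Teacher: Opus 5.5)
Your proof is correct and follows the paper's argument. You reuse the graphs $G_i$ from \cref{theorem:subgraph_counterexample} to get $\mathcal{L}(G_i)\le 3$, and you bound $\operatorname{pw}(G_i)$ from below through the subdivided complete binary tree $T_i\subseteq G_i$. The paper simply cites the value $\lceil i/2\rceil$ for complete binary trees, whereas you make this step self-contained via subgraph monotonicity and an induction through \cref{lemma:pathwidth_lemma}. One small caveat: adding $u$ to every bag only shows $\operatorname{pw}(G_i)\le\operatorname{pw}(T_i)+1$, so your parenthetical equality claim is only half-justified, though you never use it.
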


\begin{proof}
    Consider the graph $G_i$ constructed in the proof of \cref{theorem:subgraph_counterexample}. Since this graph indeed contains a subdivision of a complete binary tree of height $i$, $\operatorname{pw}(G_i)\geq \lceil \frac{i}{2} \rceil$. However, $\mathcal{L}(G_i)\leq3$ for every integer $i\geq 1$. This implies that the lower bound of required lions on general graphs cannot be written as a function of pathwidth.
\end{proof}

Even though the lower bound of required lions on general graphs cannot be written as a function of pathwidth, there exists an upper bound on general graphs as a function of pathwidth. We use the following lemma from \cite{dereniowski2015zero}:

\begin{lemma}[Lemma 2.1 of \cite{dereniowski2015zero}]\label{lemma:proper_bag_lemma}
Let $G$ be a connected graph with $\operatorname{pw}(G) \leq |V_G|-2$. Then, there is a path decomposition $\mathcal{B}$ of $G$ containing $n \geq 2$ bags such that $\operatorname{pw}(\mathcal{B})=\operatorname{pw}(G)$ and, for each $i=1,\dots, n-1$, each of $\mathcal{B}_i \setminus \mathcal{B}_{i+1}$, $\mathcal{B}_{i+1} \setminus \mathcal{B}_i$, and $\mathcal{B}_i \cap \mathcal{B}_{i+1}$ is nonempty.
\end{lemma}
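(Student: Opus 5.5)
The plan is to take an arbitrary optimal path decomposition and normalize it by deleting redundant bags. Each deletion will keep it a path decomposition and will not increase the width. At the end, connectivity of $G$ together with the bound $\operatorname{pw}(G)\le |V_G|-2$ should force all three required nonemptiness conditions.

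\textbf{Step 1 (reduction).} Fix a path decomposition $\mathcal{B}=(\mathcal{B}_1,\dots,\mathcal{B}_n)$ of width $\operatorname{pw}(G)$. While some consecutive pair satisfies $\mathcal{B}_i\subseteq\mathcal{B}_{i+1}$ or $\mathcal{B}_{i+1}\subseteq\mathcal{B}_i$, delete the smaller bag. I would check that the three conditions of \cref{prop:pw-basics} survive a deletion, say of $\mathcal{B}_i\subseteq\mathcal{B}_{i+1}$:
\begin{itemize}
\item Condition~(\ref{prop:pw-basics:i}) holds because every vertex of $\mathcal{B}_i$ still lies in $\mathcal{B}_{i+1}$.
\item Condition~(\ref{prop:pw-basics:iii}) holds because every edge covered by $\mathcal{B}_i$ is still covered by $\mathcal{B}_{i+1}$.
\item Condition~(\ref{prop:pw-basics:ii}) holds because deleting one bag from an interval of indices containing a vertex leaves an interval of the remaining indices.
\end{itemize}
The width cannot grow, and it cannot drop below $\operatorname{pw}(G)$ by minimality, so it stays equal to $\operatorname{pw}(G)$. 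Since the number of bags strictly decreases, the process terminates. In the final decomposition no bag is contained in a neighbour. Hence $\mathcal{B}_i\setminus\mathcal{B}_{i+1}\neq\emptyset$ and $\mathcal{B}_{i+1}\setminus\mathcal{B}_i\neq\emptyset$ for all $i$; empty bags in particular have been removed.

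\textbf{Step 2 (at least two bags).} If only one bag remained, then by condition~(\ref{prop:pw-basics:i}) it would equal $V_G$. Its width would then be $|V_G|-1>\operatorname{pw}(G)$, contradicting the preserved width. So $n\ge 2$.

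\textbf{Step 3 (consecutive intersections).} Suppose $\mathcal{B}_i\cap\mathcal{B}_{i+1}=\emptyset$ for some $i$. Set $X=\bigcup_{j\le i}\mathcal{B}_j$ and $Y=\bigcup_{j>i}\mathcal{B}_j$. These sets are disjoint: a vertex in both would, by condition~(\ref{prop:pw-basics:ii}), lie in $\mathcal{B}_i\cap\mathcal{B}_{i+1}$. Both sets are nonempty, since they contain the nonempty bags $\mathcal{B}_i$ and $\mathcal{B}_{i+1}$, and their union is $V_G$. Because $G$ is connected, some edge $\{x,y\}$ has $x\in X$ and $y\in Y$. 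By condition~(\ref{prop:pw-basics:iii}) some single bag contains both $x$ and $y$. But every bag lies entirely in $X$ or entirely in $Y$, which is a contradiction. Hence all consecutive intersections are nonempty.

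The only delicate point is Step~1, namely verifying that deleting a contained bag preserves the interval property~(\ref{prop:pw-basics:ii}). Steps~2 and~3 are short once the reduction is in place.
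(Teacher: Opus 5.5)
Your proof is correct and complete. The paper gives no proof of this statement; it imports it as Lemma~2.1 of Dereniowski et al. Your argument is therefore a valid self-contained justification, along the standard lines:
\begin{itemize}
\item deleting a bag contained in a neighbour preserves the three path-decomposition conditions and the width;
\item the hypothesis $\operatorname{pw}(G)\le |V_G|-2$ rules out collapse to the single bag $V_G$;
\item connectivity of $G$ rules out an empty consecutive intersection via your $X$/$Y$ split.
\end{itemize}
As a minor simplification, you could start from an optimal decomposition with the fewest bags, which makes the iterative deletion and its termination argument unnecessary.
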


\begin{theorem}\label{theorem:upper_bound_on_general_graph_theorem}
$\mathcal{L}(G) \leq \operatorname{pw}(G)+1$.
\end{theorem}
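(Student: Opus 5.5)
We propose a direct simulation of a path decomposition by lions, with one extra lion used to step between consecutive bags. First we dispose of the degenerate case $\operatorname{pw}(G)\ge |V_G|-1$. Then $\operatorname{pw}(G)+1 \ge |V_G|$, and we place one lion on every vertex at time $0$; the graph is already cleared. Otherwise $\operatorname{pw}(G)\le |V_G|-2$. We invoke \cref{lemma:proper_bag_lemma} to fix a path decomposition $\mathcal{B}=(\mathcal{B}_1,\dots,\mathcal{B}_n)$ of width $\operatorname{pw}(G)$, so $|\mathcal{B}_i|\le \operatorname{pw}(G)+1$. In this decomposition consecutive bags always have a nonempty intersection and nonempty differences in both directions.

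The strategy maintains the invariant that after phase $i$, lions occupy all of $\mathcal{B}_i$, and every vertex of $\bigcup_{j\le i}\mathcal{B}_j$ is cleared. By \cref{prop:pw-basics}(ii),(iii), $\mathcal{B}_i$ separates $\bigcup_{j\le i}\mathcal{B}_j\setminus\mathcal{B}_i$ from $\bigcup_{j>i}\mathcal{B}_j\setminus\mathcal{B}_i$. Hence, as long as all of $\mathcal{B}_i$ is occupied, the cleared region cannot be recontaminated. Initially we put lions on $\mathcal{B}_1$.

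The transition from $\mathcal{B}_i$ to $\mathcal{B}_{i+1}$ is done one vertex at a time. Lions on vertices of $\mathcal{B}_i\setminus\mathcal{B}_{i+1}$ are released, and new vertices of $\mathcal{B}_{i+1}\setminus\mathcal{B}_i$ are occupied. The ordering is chosen so that at every moment the occupied set contains a separator between the cleared part and the contaminated part, and never more than $\operatorname{pw}(G)+1$ lions are in use.

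A lion cannot teleport, so it must walk from a released vertex $x$ to a new vertex $y$. This walking step is the main obstacle. A walking lion temporarily stops guarding, and its route may pass through contaminated territory. This is where the extra lion in the bound is spent. Because $G$ is connected and the relevant vertices are cleared, we route the lion along a path inside the cleared region plus $\mathcal{B}_i\cap\mathcal{B}_{i+1}$, which is nonempty by \cref{lemma:proper_bag_lemma}. The lion travels to a vertex of $\mathcal{B}_{i+1}$ adjacent to $y$, or to a vertex through which $y$ is reachable, and then steps onto $y$. A moving lion clears every vertex it enters. Every vertex on this route lies in the cleared region, whose boundary is guarded by the lions sitting on $\mathcal{B}_i\cap\mathcal{B}_{i+1}$ and on the not-yet-released vertices.

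To make the route argument rigorous we use the remote operations. We analyze a modified path in which the vertices behind the separator are remotely cleared, which is harmless by \cref{lemma:remote_clear_lemma}. We then argue, using \cref{lemma:contamination_subset_lemma}, that the true contaminated set is contained in the modified one, which suffices.

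The step we expect to be delicate is showing that releasing $x$ does not expose cleared vertices. The vertex $x\in\mathcal{B}_i\setminus\mathcal{B}_{i+1}$ has no neighbours in $\bigcup_{j>i}\mathcal{B}_j\setminus\mathcal{B}_i$, so its neighbours all lie in the cleared region or in guarded vertices. Releasing $x$ therefore leaves it cleared, and contamination cannot reach it.

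If a new vertex $y$ is not reachable through cleared or guarded vertices, we take the spare lion instead. The spare lion goes first through the guarded set, along a route whose intermediate vertices are adjacent only to guarded vertices, and we argue by induction that such a route exists. We expect this extra care to be where the additive $+1$ is genuinely needed. Once all of $\mathcal{B}_n$ is occupied and everything before it is cleared, the whole of $V_G$ is cleared by \cref{prop:pw-basics}(i). This gives $\mathcal{L}(G)\le\operatorname{pw}(G)+1$.
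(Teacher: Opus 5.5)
Your skeleton matches the paper's, and your check that releasing $x\in\mathcal{B}_i\setminus\mathcal{B}_{i+1}$ is safe is correct. The skeleton is: fix the decomposition from \cref{lemma:proper_bag_lemma}, release the lions on $\mathcal{B}_i\setminus\mathcal{B}_{i+1}$, and occupy $\mathcal{B}_{i+1}\setminus\mathcal{B}_i$. The gap is in the step you call the main obstacle. You let a lion travel only through cleared or guarded vertices, or, in the fallback, through vertices all of whose neighbours are guarded, and you assert such a route exists. It need not.

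A new vertex can have all its neighbours in later bags. Take $K_4$ on $\{a,b,c,d\}$ plus the path with edges $de$, $ef$, $fg$. The decomposition $(\{a,b,c,d\},\{d,e,g\},\{e,f,g\})$ has width $3=\operatorname{pw}(G)$ and satisfies the conclusion of \cref{lemma:proper_bag_lemma}. Yet the only neighbour of $g$ is $f$, which is contaminated and not in $\mathcal{B}_2$.

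More seriously, no refinement of your routing rule can work. A lion moving only along routes of the kinds you allow never causes any recontamination. So your strategy would be a \emph{monotone} clearing with $\operatorname{pw}(G)+1$ lions, which is false in general. The complete binary tree of height $4$ has pathwidth $2$ but cannot be cleared monotonically with $3$ lions (\cref{sec:appendixproof}). Directly: each of its four height-$2$ subtrees either contains a starting lion or is entered through its root. In the second case, when its first leaf is cleared, all three lions are inside it and the rest of the tree must already be clear, so this happens for at most one subtree. Hence three lions start alone in three different subtrees, and each is stuck within one step on a vertex with two contaminated neighbours.

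The ``spare lion'' does not exist either. With all of $\mathcal{B}_i$ occupied and $|\mathcal{B}_i|=\operatorname{pw}(G)+1$, every lion is in use; the $+1$ is just the bag size. Finally, your remote-clear paragraph runs the wrong way. Remote clears shrink the contaminated set, so \cref{lemma:remote_clear_lemma} transfers success from the real path to the modified one, never back. That direction would need remote contamination, via \cref{lemma:remote_contaminate_lemma}.

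The missing idea is that the moving lions may walk straight through contaminated territory, because recontamination outside the region that actually needs protecting is harmless. Put $U_i=(\mathcal{B}_1\cup\dots\cup\mathcal{B}_i)\setminus\mathcal{B}_{i+1}$. Properties (ii) and (iii) of \cref{prop:pw-basics} give $N(U_i)\subseteq\mathcal{B}_i\cap\mathcal{B}_{i+1}$, so lions on $\mathcal{B}_i\cap\mathcal{B}_{i+1}$ alone keep $U_i$ cleared. Also $|\mathcal{B}_i\cap\mathcal{B}_{i+1}|\le\operatorname{pw}(G)$, because $\mathcal{B}_i\setminus\mathcal{B}_{i+1}\neq\emptyset$.

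The remaining $\operatorname{pw}(G)+1-|\mathcal{B}_i\cap\mathcal{B}_{i+1}|\ge|\mathcal{B}_{i+1}\setminus\mathcal{B}_i|$ lions guard nothing. They walk along arbitrary paths of the connected graph $G$ to the vertices of $\mathcal{B}_{i+1}\setminus\mathcal{B}_i$ and wait there. Anything they clear and lose on the way lies outside $U_i$. Once all of $\mathcal{B}_{i+1}$ is occupied, $U_{i+1}=U_i\cup(\mathcal{B}_{i+1}\setminus\mathcal{B}_{i+2})$ is cleared and $\mathcal{B}_{i+1}\cap\mathcal{B}_{i+2}$ is guarded, which is the paper's induction. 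This tolerated recontamination is exactly what separates $\mathcal{L}(G)\le\operatorname{pw}(G)+1$ from the roughly $2\operatorname{pw}(G)$ lions that monotone clearing can require.
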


\begin{proof}
If $|V_G|=1$, then $\mathcal{L}(G)=1\le \operatorname{pw}(G)+1=1$ always holds. If $\operatorname{pw}(G)=|V_G|-1$, we can simply place a lion on every vertex of $V_G$, immediately clearing $G$. Thus, we assume that $\operatorname{pw}(G) \leq |V_G|-2$.

Let $\mathcal{B}=(\mathcal{B}_1, \dots, \mathcal{B}_n)$ be a path decomposition of $G$ that satisfies \cref{lemma:proper_bag_lemma}, and let $U_i=(\mathcal{B}_1 \cup \dots \cup \mathcal{B}_i) \setminus \mathcal{B}_{i+1}$. Since \cref{lemma:proper_bag_lemma} ensures that each of $\mathcal{B}_i \setminus \mathcal{B}_{i+1}$ and $\mathcal{B}_{i+1} \setminus \mathcal{B}_i$ is nonempty and $|\mathcal{B}_{i}|\leq \operatorname{pw}(G)+1$ by the definition of path decomposition for all $i$, we get $|\mathcal{B}_i \cap \mathcal{B}_{i+1}| \leq \operatorname{pw}(G)$. Furthermore, suppose that $v\in U_i$ and $u\in N(U_i)$. $v\in (\mathcal{B}_1 \cup \dots \cup \mathcal{B}_i) \setminus \mathcal{B}_{i+1}$ implies that $v\notin \mathcal{B}_{i+1} \cup \dots \cup \mathcal{B}_n$, and $u\notin (\mathcal{B}_1 \cup \dots \cup \mathcal{B}_i) \setminus \mathcal{B}_{i+1}$ implies that $u\in \mathcal{B}_{i+1}$ or $u\notin (\mathcal{B}_1 \cup \dots \cup \mathcal{B}_i)$. Since $v \notin \mathcal{B}_{j}$ for $j > i$, the bag $\mathcal{B}_{i'}$ containing both $u$ and $v$ must satisfy $i' \le i$, which forces $u \in \mathcal{B}_i \cap \mathcal{B}_{i+1}$. Thus, we get $N(U_i) \subseteq \mathcal{B}_i \cap \mathcal{B}_{i+1}$.

Now, we place a lion on every vertex of $\mathcal{B}_1$. Since $|\mathcal{B}_1| \leq \operatorname{pw}(G)+1$ and $U_1 \subseteq \mathcal{B}_1$, we successfully clear $U_1$, and every vertex in the boundary $\mathcal{B}_1 \cap \mathcal{B}_2$ has a lion on it.

Assume as the inductive hypothesis that we have cleared $U_i$ and that every vertex in the boundary $\mathcal{B}_i \cap \mathcal{B}_{i+1}$ has a lion on it.
Consider the next bag $\mathcal{B}_{i+1}$. By the definition of pathwidth, we know that $|\mathcal{B}_{i+1}| \leq \operatorname{pw}(G)+1$.

Since $N(U_i) \subseteq \mathcal{B}_i \cap \mathcal{B}_{i+1}$, the lions occupying this boundary completely prevent any recontamination of $U_i$. Since $|\mathcal{B}_i \cap \mathcal{B}_{i+1}| \leq \operatorname{pw}(G)$ and lions already occupy this subset of $\mathcal{B}_{i+1}$, we have at least one remaining lion that is not on $\mathcal{B}_i \cap \mathcal{B}_{i+1}$.

Because $G$ is a connected graph, there exists a physical path in $G$ for the remaining lions to walk to the unvisited vertices of $\mathcal{B}_{i+1} \setminus \mathcal{B}_i$. Even if these lions must traverse the currently contaminated region ($V_G \setminus U_i$) taking multiple time steps, the cleared region $U_i$ remains completely safe due to the guarded boundary. Once the remaining lions reach and occupy the rest of $\mathcal{B}_{i+1}$, we successfully clear every vertex in $\mathcal{B}_{i+1}$.

Since $U_{i+1} = U_i \cup (\mathcal{B}_{i+1} \setminus \mathcal{B}_{i+2}) \subseteq U_i \cup \mathcal{B}_{i+1}$ and the new boundary $\mathcal{B}_{i+1} \cap \mathcal{B}_{i+2}$ is a subset of $\mathcal{B}_{i+1}$, this step always clears $U_{i+1}$ and places lions on every vertex in $\mathcal{B}_{i+1} \cap \mathcal{B}_{i+2}$ without recontaminating $U_i$.

Thus, by induction, $\mathcal{L}(G) \leq \operatorname{pw}(G)+1$.
\end{proof}

\begin{remark}
Since \cref{theorem:pathwidth_theorem} suggests that $\mathcal{L}(T) \leq \operatorname{pw}(T)+1$ is tight, the bound $\mathcal{L}(G) \leq \operatorname{pw}(G)+1$ is the best possible for general graphs.
\end{remark}

\section{Monotone Clearing}\label{sec:monotone_clearing}
In this section, we consider monotone clearings, which are defined as clearings where recontamination is forbidden. Formally, we call a clearing monotone if $W_{t-1}\supseteq W_{t}$ holds for every time step $t\geq 1$. Proving the upper bound of monotone clearings requires us to introduce a variant of pathwidth: the \emph{connected pathwidth}. If $\mathcal{B}=(\mathcal{B}_1, \dots, \mathcal{B}_n)$ is a path decomposition such that $\mathcal{B}_1 \cup \dots \cup \mathcal{B}_i$ is connected for every integer $1\leq i\leq n$, then the connected pathwidth of the graph $G$ is the minimum of $\max_{i=1,\dots,n}|\mathcal{B}_i| - 1$ over all such decompositions. From the following theorem from \cite{dereniowski2012pathwidth}, we now prove the upper bound on $\mathcal{L}^m(G)$:

\begin{lemma}[Lemma 15 of \cite{dereniowski2012pathwidth}]\label{lemma:connected_pathwidth_lemma}
For each connected graph $G$, $\operatorname{cpw}(G)\leq 2\cdot \operatorname{pw}(G) + 1$.
\end{lemma}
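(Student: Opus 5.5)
The plan is to turn an optimal path decomposition into a connected one by a greedy \emph{vertex-by-vertex} exploration, and to show that every resulting bag fits inside the union of two original bags, which gives size at most $2(\operatorname{pw}(G)+1)$ and hence width at most $2\operatorname{pw}(G)+1$. Fix a path decomposition $\mathcal{B}=(\mathcal{B}_1,\dots,\mathcal{B}_n)$ of width $k=\operatorname{pw}(G)$. For each vertex $v$ let $f(v)$ and $\ell(v)$ be the first and last indices of bags containing $v$; by \cref{prop:pw-basics}(\ref{prop:pw-basics:ii}), $v\in\mathcal{B}_m$ exactly for $f(v)\le m\le \ell(v)$. Start with $X_1=\{v_1\}$ for some $v_1\in\mathcal{B}_1$. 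Given a connected set $X_j\neq V_G$, pick $v_{j+1}\in N(X_j)$ minimizing $f$, and set $X_{j+1}=X_j\cup\{v_{j+1}\}$. Since $G$ is connected, $N(X_j)\neq\emptyset$, so this runs until $X_{|V_G|}=V_G$. The new decomposition has bags $\mathcal{D}_j=\partial(X_j)\cup\{v_{j+1}\}$, together with a final bag $\{v_{|V_G|}\}\cup\partial(X_{|V_G|-1})$.

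First I will check that $\mathcal{D}$ is a connected path decomposition; this part is routine.
\begin{enumerate}[(i)]
\item Every vertex is eventually added, so it appears in some bag.
\item If $u\in X_j$ and $v_{j+1}$ is a neighbor of $u$, then $u\in\partial(X_j)$. Hence $u$ and $v_{j+1}$ share the bag $\mathcal{D}_j$, and every edge is covered.
\item A vertex appears from the step it is added until it leaves the boundary. Once a vertex has no neighbor outside $X_j$, it never regains one, because $X_j$ only grows. So the indices at which a vertex appears are contiguous.
\item $\mathcal{D}_1\cup\dots\cup\mathcal{D}_j$ equals $X_{j+1}$ up to boundary bookkeeping; it is exactly the vertices added so far, which form a connected set by construction.
\end{enumerate}

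The heart of the proof is the size bound $|\mathcal{D}_j|\le 2k+2$. Let $i=f(v_{j+1})=\min\{f(y):y\in N(X_j)\}$. Every $x\in\partial(X_j)$ has a neighbor $y\notin X_j$ with $f(y)\ge i$, and $x,y$ share a bag, so $\ell(x)\ge i$. I split $\partial(X_j)$ into two groups.
\begin{enumerate}[(a)]
\item Boundary vertices with $f(x)\le i$ satisfy $f(x)\le i\le \ell(x)$, so they lie in $\mathcal{B}_i$, together with $v_{j+1}$ itself.
\item For boundary vertices with $f(x)>i$, I aim to show they all lie in one bag $\mathcal{B}_m$.
\end{enumerate}
For group (b) I would take $m$ to be the smallest $f$-value among such vertices, or alternatively the value of the frontier minimum at the last moment it exceeded $i$. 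The intended argument is as follows. When such an $x$ was added, it was the minimum-$f$ frontier vertex, and $v_{j+1}$ (or a predecessor that brought it to the frontier) had a larger $f$ at that time. So a frontier vertex with small $f$ appeared only later, through a chain of greedy additions each of which had minimal $f$ when chosen. Using \cref{prop:pw-basics}(\ref{prop:pw-basics:iii}) along this chain should force $\ell(x)\ge m$ and $f(x)\le m$ for all of group (b). Then $\mathcal{D}_j\subseteq\mathcal{B}_i\cup\mathcal{B}_m$.

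I expect exactly this group-(b) claim to be the main obstacle, because the greedy order can make the frontier minimum $f$ drop, leaving earlier-added vertices with large $f$ on the boundary. If the direct argument fails, my fallback is to change the greedy rule. Instead of single vertices, I would absorb whole bags: when the minimum frontier index is $i$, add all vertices of $\mathcal{B}_i$ reachable from $X_j$ through vertices of index at most $i$, one at a time. This keeps the frontier minimum nondecreasing, and the invariant becomes $\partial(X_j)\subseteq \mathcal{B}_{i}\cup \mathcal{B}_{i'}$, where $i'$ is the current processing index.
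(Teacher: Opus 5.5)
Your check that $\mathcal{D}$ is a connected path decomposition is fine, and so is group (a). The group-(b) claim, however, is not merely unproven: for the min-$f$ greedy rule it is false, and the resulting width is not bounded by any function of $\operatorname{pw}(G)$.

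Here is a counterexample. Take $r\ge 7$ and vertices $z,x_1,\dots,x_r,y_1,\dots,y_r,y_1'$, with edges $zx_1$, $x_sx_{s+1}$ ($s<r$), $x_sy_s$ (all $s$), $x_1y_1'$ and $y_1y_1'$. Let $z$ occupy bags $1,\dots,2r$, let $x_s$ occupy bags $2(r-s)+2,\dots,2(r-s)+4$, let $y_s$ occupy only bag $2(r-s)+3$, and let $y_1'$ occupy only bag $2r+1$.

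\begin{itemize}
\item Every bag has at most three vertices and $G$ contains a triangle, so this decomposition is optimal with $k=\operatorname{pw}(G)=2$.
\item Since $\mathcal{B}_1=\{z\}$, the rule forces $v_1=z$.
\item For every $t\le s$ we have $f(x_{s+1})=2(r-s)<2(r-t)+3=f(y_t)$, and $f(y_1')=2r+1$. So your rule adds $x_1,x_2,\dots,x_r$ in this order.
\item When $x_r$ is added, every $x_s$ with $s<r$ still has its leaf $y_s$ outside. Hence $\mathcal{D}_j=\{x_1,\dots,x_r\}$, which has $r>2k+2$ vertices, and already $x_1$ and $x_3$ share no bag.
\end{itemize}

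Taking $y_s$ right after $x_s$ would keep every bag of size at most $3$. So the failure lies in your priority rule, not in the statement.

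The fallback does not repair this. Read literally, it fails on the same graph. From $X=\{z\}$ the minimum frontier index is $i=2r$, with $\mathcal{B}_{2r}=\{z,x_1,x_2\}$. After absorbing $x_1,x_2$, the vertex $x_3$ with $f(x_3)=2r-4$ is on the frontier, so the frontier minimum drops.

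Suppose instead a phase absorbs everything reachable from $X_j$ through vertices of index at most $i$. Then the minimum is indeed nondecreasing between phases, but three problems remain:
\begin{itemize}
\item One phase can swallow almost the whole graph; here it takes the entire spine and $y_2,\dots,y_r$.
\item The order inside a phase is left open. Choosing it is essentially the original problem again, now run leftward from index $i$, and a spine-first order reproduces the blow-up.
\item The invariant $\partial(X_j)\subseteq\mathcal{B}_i\cup\mathcal{B}_{i'}$ is stated with $i'$ undefined and is not proved.
\end{itemize}

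What is missing is an exploration order that retires stale boundary vertices before the search moves on, together with an invariant that provably caps every new bag at $2k+2$ vertices. Supplying that is the actual content of the cited result. Dereniowski's construction is considerably more delicate than a single greedy key. The paper itself does not reprove the lemma; it imports it as Lemma~15 from that source.
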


\begin{theorem}\label{theorem:monotonous_upper_bound_on_general_graph_theorem}
$\mathcal{L}^m(G) \leq 2\operatorname{pw}(G)+2$.
\end{theorem}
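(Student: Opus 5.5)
Our approach combines \cref{lemma:connected_pathwidth_lemma} with a monotone version of the bag-by-bag sweep from the proof of \cref{theorem:upper_bound_on_general_graph_theorem}. We will prove the intermediate claim $\mathcal{L}^m(G)\le \operatorname{cpw}(G)+1$ for every connected graph $G$. Then \cref{lemma:connected_pathwidth_lemma} gives $\mathcal{L}^m(G)\le \operatorname{cpw}(G)+1\le 2\operatorname{pw}(G)+2$. The trivial cases ($|V_G|=1$, or enough lions to cover every vertex at time $0$) are immediate, so we fix a connected path decomposition $\mathcal{B}=(\mathcal{B}_1,\dots,\mathcal{B}_n)$ of width $\operatorname{cpw}(G)$. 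We may assume consecutive bags differ by introducing or forgetting one vertex at a time; refining a decomposition this way preserves both the width and the connectivity of prefixes.

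The strategy keeps the invariant that, after processing bag $i$, the set $P_i=\mathcal{B}_1\cup\dots\cup\mathcal{B}_i$ is entirely cleared and lions occupy every vertex of $\mathcal{B}_i$. Exactly as in the proof of \cref{theorem:upper_bound_on_general_graph_theorem}, every edge leaving $P_i$ has its $P_i$-endpoint in $\mathcal{B}_i$, since a vertex forgotten before bag $i$ has all its neighbours in earlier bags. Hence every boundary vertex of the cleared region carries a lion.

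\begin{itemize}
\item \textbf{Forgetting a vertex} $v$: the lion on $v$ becomes free. Keeping it parked at $v$ or moving it inside the cleared set is harmless, since $N(v)\subseteq P_i$ in that case.
\item \textbf{Introducing a vertex} $w$: by connectivity of $P_{i+1}$, $w$ has a neighbour in $P_i$, and that neighbour must lie in $\mathcal{B}_i$ because forgotten vertices have no neighbours outside $P_i$. A free lion walks to that neighbour through the cleared region $P_i$ and then steps onto $w$.
\end{itemize}

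Since $|\mathcal{B}_{i+1}|\le \operatorname{cpw}(G)+1$, there are always enough lions: with $\operatorname{cpw}(G)+1$ lions, at most $|\mathcal{B}_i|\le\operatorname{cpw}(G)+1$ are pinned, and introduction happens only when the new bag still fits.

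The main obstacle is the \textbf{monotonicity check}, since no cleared vertex may ever be recontaminated, including during lion transit. First, the free lion travels only through cleared vertices of $P_i$. The vertex it leaves is either an interior vertex of $P_i$ or a forgotten bag vertex whose neighbourhood is cleared, so vacating it exposes nothing. Second, at the moment of the step onto $w$, we must check that the rule for $W_t$ does not recontaminate the vertex $u$ just vacated. This requires one of two conditions: $u$ still has its pinned lion, or all of $u$'s contaminated neighbours are guarded. The fix is to leave a lion at the attachment vertex $u\in\mathcal{B}_i$, which is there anyway since $u\in\mathcal{B}_i$ is pinned, and to bring the free lion onto $u$ first. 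Two lions then momentarily share $u$, and only the extra one steps to $w$. This is exactly the reason we have one spare lion beyond $\operatorname{cpw}(G)$: the spare lion, together with the freed lions, does the walking while the bag stays guarded.

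Finally, $w$ itself: it is cleared upon arrival, and it is not contaminated in the next round because a lion remains on it. Every cleared vertex is therefore either lion-occupied or has only cleared neighbours, so $W_{t}\subseteq W_{t-1}$ throughout. After bag $n$ we have $P_n=V_G$ cleared, which gives the bound.
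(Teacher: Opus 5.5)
Your proposal is correct and takes essentially the paper's route: you reduce to $\mathcal{L}^m(G)\le\operatorname{cpw}(G)+1$ via \cref{lemma:connected_pathwidth_lemma}, then run the bag-by-bag sweep of \cref{theorem:upper_bound_on_general_graph_theorem}, moving lions only through the connected cleared prefix. Your refinement into single introduce/forget steps (new vertices must be introduced in an order that keeps each prefix connected, which is always possible) and your explicit check that the attachment vertex keeps its pinned lion make the paper's brief sketch rigorous.
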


\begin{proof}
Since \cref{lemma:connected_pathwidth_lemma} suggests that connected pathwidth $\operatorname{cpw}(G) \leq 2\operatorname{pw}(G)+1$, we can follow the strategy from \cref{theorem:upper_bound_on_general_graph_theorem}. Since no vertex in $\mathcal{B}_{i+1}$ is ever recontaminated while clearing $\mathcal{B}_{i+1}$, recontamination occurs only when the lions are moving to fill the new vertices of $\mathcal{B}_{i+1}$. However, the connected pathwidth assures us that the subgraph induced by $\mathcal{B}_1 \cup \dots \cup \mathcal{B}_{i+1}$ is connected. This guarantees that for the lions on $\mathcal{B}_i$, there always exists a valid physical path to move them to the frontier $\mathcal{B}_i \cap \mathcal{B}_{i+1}$ and subsequently spread into the rest of $\mathcal{B}_{i+1}$ without leaving the cleared subgraph. Thus, we can completely prevent recontamination at every step, allowing a monotonic clearing path.
\end{proof}

We can show that this bound is tight up to a small additive constant. Due to space constraints, the proof is provided in \cref{sec:appendixproof}. We now prove that $\mathcal{L}^m(G)$ has a lower bound in terms of pathwidth. To prove this, we first establish the following lemma:

\begin{lemma}\label{lemma:boundary_keeping_lemma}
Let $\pi=\{\pi_1, \dots, \pi_n\}$ be a monotonic path to clear the graph $G$ using $k$ lions. Then, $\partial(C_t) \subseteq L_t$ for every $t \geq 0$.
\end{lemma}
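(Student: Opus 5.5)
The plan is to argue by contradiction, using the one-step contamination rule (i) directly. Suppose that for some $t\ge 0$ there is a vertex $w\in\partial(C_t)$ with $w\notin L_t$. By the definition of the boundary, $w\in C_t$ has a neighbor $v\in N(C_t)$, and $N(C_t)\subseteq V_G\setminus C_t=W_t$, so $v$ is contaminated at time $t$. The key step is then to show that $w$ becomes contaminated at time $t+1$, contradicting monotonicity ($W_{t}\supseteq W_{t+1}$ would force $w\notin W_{t+1}$).

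This needs care for two reasons. First, the path might end at time $t$, i.e.\ $t=n$. Second, lions may move at step $t+1$, so $w$ could be protected by a lion arriving on $w$ or by a lion traversing the edge $\{v,w\}$.

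For the first case, note that if $t=n$ then $W_n=\emptyset$ because $\pi$ clears $G$. Then $C_n=V_G$, so $\partial(C_n)=\emptyset$ (as $N(V_G)=\emptyset$) and the claim is trivial. More generally, whenever $W_t=\emptyset$ the boundary is empty, so we may assume $W_t\neq\emptyset$ and that the process continues with some step $\pi_{t+1}$.

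For the second case, I would instead look backward in time rather than forward. Let $t$ be a time with $w\in\partial(C_t)\setminus L_t$, witnessed by $v\in W_t\cap N(w)$. If $t=0$, then $C_0=L_0$ and so $w\in L_0$, a contradiction. So $t\ge1$. By monotonicity, $v\in W_t\subseteq W_{t-1}$ and $w\in C_t$.

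Now consider how $w$ came to be in $C_t$ under rule (i). Since $w\notin L_t$ and $v\in W_{t-1}\cap N(w)$, rule (i) would put $w$ into $W_t$ unless the edge $\{v,w\}$ was used by a lion at step $t$. A lion moving $w\to v$ would put $v\in L_t$, contradicting $v\in W_t$. A lion moving $v\to w$ would put $w\in L_t$, contradicting our assumption. So $w\in W_t$, contradicting $w\in C_t$.

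Hence no such $w$ exists, and $\partial(C_t)\subseteq L_t$ for all $t$. Note that the backward argument does not need the number of lions $k$ at all; it uses only rule (i) and $W_t\subseteq W_{t-1}$.

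I expect the main obstacle to be precisely this choice of direction. The forward argument ("$w$ gets recontaminated at $t+1$") is the intuitive one, but it fails when a lion moves onto $w$ at step $t+1$. The backward argument avoids that case by using the monotonicity fact $v\in W_{t-1}$ together with the exact edge-guarding clause in rule (i).

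I would also double-check the indexing convention in rule (i), where the edge condition refers to $\pi_t$ for the transition $W_{t-1}\to W_t$. This matters to ensure that the guarded edge considered really is the move performed at step $t$.
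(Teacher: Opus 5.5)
Your proof is correct and follows the paper's argument. Both assume a boundary vertex with no lion on it, use monotonicity to place its contaminated neighbor in $W_{t-1}$, and then rule out both directions of lion traversal of the shared edge at step $t$. Your version is slightly tighter: the spreading clause of rule (i) makes the paper's case split on whether $w\in W_{t-1}$ unnecessary, and you dispose of $t=0$ explicitly via $C_0=L_0$, which the paper's proof, referring to $W_{t-1}$, leaves implicit.
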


\begin{proof}
We prove this by contradiction. Suppose there exists a time step $t$ and a vertex $v$ such that $v \in \partial(C_t)$ but $v \notin L_t$.
Since $v \in \partial(C_t)$, we know that $v \in C_t$ and there exists a neighbor $u \in W_t \cap N(v)$. Because $\pi$ is a monotonic path ($W_t \subseteq W_{t-1}$), it must be that $u \in W_{t-1}$.
Consider the state of vertex $v$ at time step $t-1$:

\begin{enumerate}
    \item If $v \in W_{t-1}$: By the problem definition, since $v\in C_t$, it must hold that $v\in L_t$. However, this contradicts our assumption that $v \notin L_t$.
    \item If $v \in C_{t-1}$: Since $u \in W_{t-1}$, $v$ has a contaminated neighbor at time $t-1$. For $v$ to remain in $C_t$ at time $t$ while $v\notin L_t$, a lion must traverse the edge between $v$ and $u$ to block the contamination. Since $v \notin L_t$, the lion cannot move from $u$ to $v$; it must move from $v$ to $u$. However, if a lion arrives at $u$ at time $t$, then $u \in L_t \subseteq C_t$, which contradicts the fact that $u \in W_t$.
\end{enumerate}

Both cases lead to a contradiction. Thus, every vertex in the boundary $\partial(C_t)$ must be occupied by a lion at time $t$.
\end{proof}

Now, we use following theorem from \cite{Bertschinger23} to prove the lower bound:
\begin{theorem}[Theorem 11 of \cite{Bertschinger23}] \label{theorem:monotonous_polite_theorem}
Let $G$ be a graph and $C$ be a monotone clearing of $G$ with $k$ lions. Then, there exists another monotone clearing which uses $k$ polite lions.
\end{theorem}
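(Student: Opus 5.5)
The plan is to \emph{sequentialize} each step of a given monotone clearing $\pi$ with $k$ lions. I read ``polite'' as: at every time step at most one lion moves (if the definition in \cite{Bertschinger23} differs, the same sequentialization should adapt). Fix a step $t$ and let $M_t=\pi_t$ be its set of moves $(v,u)$ with $v\neq u$. I will replace this single step by a sequence of single-lion steps. The invariants are that the cleared set never shrinks, and that after the last sub-step the occupied set is $L_t$ and the cleared set is $C_t$. Concatenating over all $t$ then gives a polite monotone clearing with the same $k$ lions.

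First I simplify $M_t$ using that lions are indistinguishable. Consider the directed multigraph on $V_G$ with arcs $M_t$. Any directed cycle of moves leaves the occupied set unchanged, so I delete it and let those lions stay. The deleted moves traversed edges, and this could matter for the edge-blocking clause in rule (i). However, every endpoint of such a move is occupied both before and after the step, so no contamination can enter along those edges anyway. After this reduction, $M_t$ decomposes into vertex-disjoint directed chains $v_1\to v_2\to\cdots\to v_m$: $v_1$ is vacated, $v_m$ is newly occupied, and the inner vertices stay occupied.

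The key tool is \cref{lemma:boundary_keeping_lemma}: $\partial(C_{t-1})\subseteq L_{t-1}$ and $\partial(C_t)\subseteq L_t$. Every vacated vertex $v_1\in C_t\setminus L_t$ therefore satisfies $N(v_1)\subseteq C_t$. Within a chain I always move the head lion first ($v_{m-1}\to v_m$), then $v_{m-2}\to v_{m-1}$, and so on. This keeps every inner vertex occupied except during one sub-step, and in that sub-step the lion is crossing the edge into the vertex that was just refilled. So the only dangerous moment is the final move out of $v_1$. That move is safe as soon as every neighbor of $v_1$ lying in $W_{t-1}$ has already been cleared by an earlier sub-step. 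Across chains, this gives a precedence relation: chain $A$ must finish before chain $B$ releases its tail if the head of $A$ is a contaminated neighbor of $B$'s tail. If this relation is acyclic, a topological order of the chains gives the desired sequence. I then check by induction over sub-steps, using \cref{lemma:contamination_subset_lemma}-style reasoning, that no cleared vertex ever acquires an unguarded contaminated neighbor.

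The main obstacle I expect is a cycle in this precedence relation: chains whose tails each wait on the head of another. Here I would exploit that a polite strategy may use more time. I would first move each head lion of the cyclic group one step forward into its contaminated target while the tails still stand. This is possible only if heads and tails are different lions, i.e., chains of length $m\geq 3$. For length-$2$ chains the head and tail are the same lion. In that case I would argue that the vertex $v_1$ has all its $W_{t-1}$-neighbors among the heads $u$ of the cycle. I would then reassign targets, exploiting indistinguishability, so that the cyclic dependency collapses to one where some tail has no outstanding contaminated neighbor, and repeat. If this reassignment argument fails, my fallback is to first reduce to strategies where each step clears at most one new vertex, splitting each step by arrival vertex and checking monotonicity through \cref{lemma:boundary_keeping_lemma}. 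In such strategies the precedence relation trivially has no cycles.
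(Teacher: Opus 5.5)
The paper gives no proof of this statement; it is imported as Theorem~11 of \cite{Bertschinger23}. So your argument has to stand on its own, and as written it has two concrete gaps.

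\textbf{First gap: the order inside a chain is backwards.} Rule~(i) evaluates the state only at the end of each step. A vertex that is empty at the end of even one sub-step is therefore recontaminated if it has a contaminated neighbour across an edge not traversed in that sub-step. An inner vertex $v_j$ ($1<j<m$) lies in $L_{t-1}\cap L_t$, and it can belong to $\partial(C_{t-1})$ with a contaminated neighbour $w\neq v_{j+1}$.

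For example, let $N(v_1)=\{v_2\}$, let $v_2$ be adjacent to $v_1,v_3,w$ with $v_3,w\in W_{t-1}$, and take the moves $v_1\to v_2\to v_3$. This is a monotone step. Moving the head lion $v_2\to v_3$ first guards only the edge $v_2v_3$, so $v_2$ is recontaminated from $w$ and monotonicity is lost.

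You must go tail-first: $v_1\to v_2$ (two lions now on $v_2$), then one lion $v_2\to v_3$, and so on. By indistinguishability this is one lion walking $v_1,\dots,v_m$ past stationary lions, so inner vertices are never empty. This needs co-location, which the model permits (the proof of \cref{lemma:subtree_on-graph_lion_lemma} removes ``excess lions on $v$''). But once several lions may share a vertex, the decomposition into vertex-disjoint chains is no longer automatic. You should then argue on the acyclic move multigraph left after your (correct) cycle removal.

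\textbf{Second gap: the precedence cycles never arise.} The cross-chain cycles you single out as the main obstacle do not occur, and the ``reassignment'' and ``fallback'' you sketch for them are not arguments yet.

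What you are missing is a one-line consequence of rule~(i). It is stronger than $\partial(C_t)\subseteq L_t$ from \cref{lemma:boundary_keeping_lemma}, which only says the contaminated neighbours of a vacated vertex are cleared by \emph{someone}. The fact is: if $v\in L_{t-1}\setminus L_t$ stays clear and $w\in N(v)\cap W_{t-1}$, then the edge $vw$ is traversed in $\pi_t$. Since $w$ holds no lion at time $t-1$ and $v\notin L_t$, that traversal must be the move $(v,w)\in\pi_t$.

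Consequently, a tail holding one lion has no contaminated neighbour except possibly its own head (when $m=2$), and that very move guards it. The tail move is safe at any sub-step, and no chain ever waits for another.

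For the general multigraph, proceed as follows.
\begin{itemize}
\item Repeatedly pick a vertex $u$ with remaining outgoing but no remaining incoming moves. Its current lion count equals its final count plus remaining outgoing minus remaining incoming moves, so $u$ holds enough lions for its remaining outgoing moves.
\item Execute those moves one at a time, moves into $W_{t-1}$ first.
\item Then $u$ is emptied only if $u\notin L_t$, and only at its last move. By the fact above, its only possibly contaminated neighbour at that moment is that move's destination.
\item Vertices of $W_{t-1}$ that receive lions have no outgoing moves, so they stay occupied.
\end{itemize}
Lions only ever stand on $L_{t-1}\cup L_t$, so the cleared set at the end is exactly $C_{t-1}\cup(L_t\cap W_{t-1})=C_t$. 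That is what you need to concatenate the simulated steps.
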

Here, we say the lions are \emph{polite} if at most one lion moves at each time step.

\begin{theorem}\label{theorem:monotonous_lower_bound_on_general_graph_theorem}
$\operatorname{pw}(G) \leq \mathcal{L}^m(G)$.
\end{theorem}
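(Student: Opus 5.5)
We will turn an optimal monotone clearing into a path decomposition whose bags have at most $k+1$ vertices, where $k=\mathcal{L}^m(G)$; this gives $\operatorname{pw}(G)\le k$. By \cref{theorem:monotonous_polite_theorem} we may assume the monotone clearing $\pi=\{\pi_1,\dots,\pi_T\}$ with $k$ lions is polite, so at each time step at most one lion moves.

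For each $t$ set $X_t:=L_t$, a set of at most $k$ vertices. Monotonicity gives $C_0\subseteq C_1\subseteq\dots\subseteq C_T=V_G$. By \cref{lemma:boundary_keeping_lemma}, $\partial(C_t)\subseteq L_t$ for all $t$. The candidate decomposition interleaves the lion positions with the transition steps:
\[
\mathcal{B}_{2t}:=L_t,\qquad \mathcal{B}_{2t+1}:=L_t\cup L_{t+1}.
\]
Politeness gives $|L_t\cup L_{t+1}|\le k+1$, so every bag has size at most $k+1$.

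We then check the three conditions of \cref{prop:pw-basics}.

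\emph{Covering.} A vertex ends up cleared only after a lion has visited it, so each vertex lies in some $L_t$.

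\emph{Edges.} Take an edge $\{u,v\}$ and let $t$ be the first time at which both endpoints are cleared, say $v$ is cleared at time $t$ and $u\in C_{t-1}$. If $u\in C_{t-1}$ and $v\in W_{t-1}$, then $u\in\partial(C_{t-1})\subseteq L_{t-1}$. Also $v\in L_t$, since $v$ becomes cleared at time $t$ (or, when $t=0$, both endpoints lie in $L_0$). Hence both endpoints lie in $\mathcal{B}_{2t-1}=L_{t-1}\cup L_t$.

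\emph{Interpolation.} This is the main obstacle, because a lion may leave a vertex and a different lion may later return to it, so a vertex need not occupy a contiguous interval of bags. We fix this by replacing the naive bags with
\[
\mathcal{B}_{2t}:=\partial(C_t),\qquad \mathcal{B}_{2t+1}:=\partial(C_t)\cup\{x_{t+1}\},
\]
where $x_{t+1}$ is the destination of the unique move at step $t+1$. A vertex $v$ lies in $\partial(C_t)$ exactly when $v\in C_t$ and some neighbour of $v$ is in $W_t$. Since $C_t$ only grows and $W_t$ only shrinks, $v$ enters $C$ once and remains there, and its contaminated neighbours can only disappear. So $v\in\partial(C_t)$ holds on a contiguous interval of $t$: from the time $v$ is cleared until its last contaminated neighbour is cleared. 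The extra vertex $x_{t+1}$ is either already in $\partial(C_{t+1})$ or is a fully interior, newly cleared vertex; in the latter case it appears in only one bag, which is harmless. If $x_{t+1}$ was already cleared, we drop it from the bag.

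The bag sizes are still bounded, since $\partial(C_t)\subseteq L_t$ gives $|\partial(C_t)|\le k$ and hence each bag has at most $k+1$ vertices. The covering and edge arguments above carry over with $\partial(C_t)$ in place of $L_t$, because the vertex $u$ used there lies in $\partial(C_{t-1})$ and $v$ is $x_t$. Therefore $\operatorname{pw}(G)\le k=\mathcal{L}^m(G)$.
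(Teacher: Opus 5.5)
Your final construction is essentially the paper's proof. With polite lions, $L_t\cap W_{t-1}\subseteq L_t\setminus L_{t-1}\subseteq\{x_t\}$, so your odd bags $\partial(C_{t-1})\cup\{x_t\}$ (with $x_t$ dropped when already cleared) coincide with the paper's bags $\mathcal{B}_t=\partial(C_{t-1})\cup(L_t\cap W_{t-1})$. The size bound comes from \cref{lemma:boundary_keeping_lemma} in the same way. Your contiguity argument (a vertex enters $\partial(C)$ only at the step where it is newly cleared, and leaves it for good once its last contaminated neighbour is cleared) is the paper's verification of condition (ii). The interleaved even bags $\partial(C_t)$ are harmless.

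There is one concrete slip: your first bag is $\partial(C_0)$, whereas it must be $L_0$, as in the paper. Take a vertex $v\in L_0\setminus\partial(C_0)$, i.e.\ one whose neighbours are all occupied at time $0$. It is cleared from the start. It is never in any $\partial(C_t)$, since its set of contaminated neighbours is empty at time $0$ and can only shrink. It is also dropped whenever it equals some $x_{t+1}$. So it lies in no bag, and none of its incident edges is covered.

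This happens for optimal clearings. On the star $K_{1,3}$ with centre $c$ and leaves $a_1,a_2,a_3$, start lions on $c$ and $a_1$ and move the second lion along $a_1,c,a_2,c,a_3$. This is a polite monotone clearing with $2=\mathcal{L}^m(K_{1,3})$ lions, and your bags never contain $a_1$. Hence your claim that the $t=0$ case of the edge argument ``carries over'' is false as written.

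The fix is immediate: set $\mathcal{B}_0:=L_0=C_0\supseteq\partial(C_0)$, which has at most $k$ vertices. Then vertices of $L_0\setminus\partial(C_0)$ occur only in $\mathcal{B}_0$, those of $\partial(C_0)$ occupy an initial contiguous run, and the rest of your argument goes through. You should also state explicitly that politeness is what makes $x_t$ the only newly cleared vertex at step $t\ge 1$. Your ``say $v$ is cleared at time $t$ and $u\in C_{t-1}$'' silently relies on this.
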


\begin{proof}
Let $\pi=\{\pi_1, \dots, \pi_n\}$ be a monotonic path to clear the graph $G$ with $k$ polite lions where $\mathcal{L}^m(G)=k$. Note that, by \cref{theorem:monotonous_polite_theorem}, we always have monotonic path $\pi$ with $k$ polite lions. By the definition of monotonicity, we have $W_{t} \subseteq W_{t-1}$ for every integer $1 \leq t \leq n$.
For the path decomposition $\mathcal{B}$, let $\mathcal{B}_0 = L_0$ and $\mathcal{B}_t = \partial(C_{t-1}) \cup (L_t \cap W_{t-1})$ for every integer $1 \leq t \leq n$.
Since $\pi$ is a monotonic path, by \cref{lemma:boundary_keeping_lemma} we get $\partial(C_{t-1}) \subseteq L_{t-1}$, and $|L_{t-1}\cup L_t|\leq k+1$ since at most one lion is allowed to move at each time step. Therefore, we get the bound $|\mathcal{B}_t| \leq |(\partial(C_{t-1})\cup L_t) \cap W_{t-1}| \leq |L_{t-1}\cup L_t| \leq k+1$.
Now, we will use conditions in \cref{prop:pw-basics} to show that $\mathcal{B}$ is a valid path decomposition of $G$.

\proofsubparagraph*{Condition (i)} Since $\pi$ successfully clears the graph $G$, every initially contaminated vertex in $V_G$ must be visited by a lion at least once to be cleared. Thus, $\bigcup_{i=0}^n \mathcal{B}_i \supseteq \bigcup_{i=1}^n (L_i \cap W_{i-1}) \cup L_0 = V_G$, satisfying the first condition.

\proofsubparagraph*{Condition (ii)} Let $v \in \mathcal{B}_t$, $v \notin \mathcal{B}_{t+1}$, and $v\in \mathcal{B}_{t+k}$ for some $k>1$ and $t+k\leq n$. Since $v \in \mathcal{B}_t$, it follows that $v$ is cleared at or before time $t$, meaning $v \in C_t \subseteq C_{t+1} \subseteq \dots \subseteq C_n$. Then, since $v \notin \mathcal{B}_{t+1}$, we know $v \notin \partial(C_t)$. Because $\pi$ is a monotonic clearing path, $v \notin \partial(C_i)$ holds for $t\leq i\leq n$. Therefore, for $v\in \mathcal{B}_{t+k}$, $v \in L_{t+k} \cap W_{t+k-1}$ would need to hold. However, since $v \in C_t \subseteq C_{t+k-1}$, $v \notin W_{t+k-1}$. Therefore, $v\notin L_{t+k} \cap W_{t+k-1}$, which implies $v\notin \mathcal{B}_{t+k}$, thereby satisfying the second condition.

\proofsubparagraph*{Condition (iii)} Suppose $v \in L_t \cap W_{t-1} \subseteq \mathcal{B}_t$ (i.e., $v$ is newly cleared at time $t$). Note that every vertex in $V_G \setminus L_0$ satisfies this condition for some $t$.
Consider the neighborhood $N(v)$. By the definition of the boundary, every vertex in $N(v)$ must belong to either $\partial(C_{t-1})$ or $W_{t-1}$.
Since $v \in L_t \cap W_{t-1} \subseteq \mathcal{B}_t$ and $\partial(C_{t-1}) \subseteq \mathcal{B}_t$, every vertex in $N(v) \cap \partial(C_{t-1})$ is in $\mathcal{B}_t$. Therefore, the edge between $v$ and any already-cleared neighbor is covered in $\mathcal{B}_t$.
If $N(v) \cap W_{t-1} \neq \emptyset$, let $u \in N(v)\cap W_{t-1}$ be such a neighbor. If $u\in L_t\cap W_{t-1}$, then $u\in \mathcal{B}_t$. Otherwise, $u$ remains contaminated while $v$ is cleared, $v$ becomes part of the boundary $\partial(C_t)$ and remains in $\partial(C_i)$ for every time step $t \leq i < t'$, where $t'$ is the step at which $u$ is finally cleared ($u \in W_{t'-1} \cap L_{t'}$). Consequently, $v \in \partial(C_{t'-1}) \subseteq \mathcal{B}_{t'}$ and $u \in L_{t'} \cap W_{t'-1} \subseteq \mathcal{B}_{t'}$. This implies that for every vertex in $V_G \setminus L_0$, every incident edge is covered in some bag $\mathcal{B}_i$.
Finally, if $v \in L_0 \setminus \partial(C_0)$, then $N(v) \subseteq L_0$, meaning both $v$ and $N(v)$ are entirely in $\mathcal{B}_0$. If $v \in L_0 \cap \partial(C_0)$, there exists a neighbor $u \in W_0$ which is cleared at some later time $t$. By the same logic as above, $v$ remains in $\partial(C_{t-1})$, so both $v$ and $u$ appear together in $\mathcal{B}_t$. Thus, the third condition holds.

Since all three conditions for a valid path decomposition hold, $\mathcal{B}$ is a valid path decomposition of $G$. Since $\max_i |\mathcal{B}_i| \leq k+1$, we conclude that $\operatorname{pw}(G) \leq \max_i |\mathcal{B}_i| - 1 \leq k = \mathcal{L}^m(G)$.
\end{proof}

Note that for a complete graph $K_m$, it is easy to show that $\operatorname{pw}(K_m)=m-1=\mathcal{L}^m(K_m)$ for every $m$, thus the general lower bound is strictly tight. Furthermore, this result gives us $\operatorname{pw}(G) \leq \mathcal{L}^m(G) \leq 2\operatorname{pw}(G)+2$, resolving Open Question 1 posed in \cite{Bertschinger23}:

\begin{quote}
\emph{Open Question 1.}\cite{Bertschinger23} Given a $k$-clearable graph $G = (V, E)$, is it always monotonically clearable with $k + 1$ lions? More formally, is there a non-trivial upper bound on the number of lions required for a monotone clearing?
\end{quote}

From \cref{theorem:pathwidth_does_not_lower_bound_lion_number_in_general_graphs}, we can construct a graph $G$ with $\mathcal{L}(G) \leq 3$ and an arbitrarily large pathwidth $\operatorname{pw}(G) \geq K$ for any positive integer $K$. However, since the lower bound $\operatorname{pw}(G) \leq \mathcal{L}^m(G)$ always holds, $\mathcal{L}^m(G)$ can scale infinitely with $\operatorname{pw}(G)$. Thus, even if a graph $G$ is $k$-clearable, it is not necessarily monotonically clearable with $k+1$ lions. The gap between $\mathcal{L}(G)$ and $\mathcal{L}^m(G)$ is unbounded.
Furthermore, we provide a non-trivial upper bound on the number of lions required for a monotone clearing as $\mathcal{L}^m(G) \leq 2\operatorname{pw}(G)+2$.

\section{Conclusion}

We investigated the lions and contamination problem through the lens of pathwidth, examining both the ordinary lion number $\mathcal{L}(G)$ and the monotone lion number $\mathcal{L}^m(G)$.
In \cref{sec:tree}, we proved an isometric-subgraph monotonicity theorem, showing that \(\mathcal{L}(H)\le \mathcal{L}(G)\) whenever \(H\) is an isometric subgraph of \(G\). Using this property, we established that \(\operatorname{pw}(T)\le \mathcal{L}(T)\le \operatorname{pw}(T)+1\). 
Since both extremes are achievable, this provides the best possible characterization of the lion number strictly in terms of pathwidth on trees.
In \cref{sec:subgraph_counterexample}, we provided a counterexample showing that monotonicity fails for general subgraphs. This resolves the subgraph question from previous work in the negative.
In \cref{sec:general_graph}, we extended the upper bound to all connected graphs by proving $\mathcal{L}(G)\le \operatorname{pw}(G)+1$.
We also showed that pathwidth does not provide a general lower bound for \(\mathcal{L}(G)\).
This establishes a fundamental separation between the non-monotone lions and contamination model and the behavioral paradigms of classical graph searching.
Additionally, as an extension of our counterexample, we note that the zero-visibility cop number is similarly non-monotonic with respect to subgraph inclusion.
In \cref{sec:monotone_clearing}, we established lower and upper bounds relating \(\mathcal{L}^m(G)\) and \(\operatorname{pw}(G)\), demonstrating a constant-factor equivalence between the monotone lion number and pathwidth on connected graphs. Importantly, this completely resolves Open Question 1 posed by Bertschinger et al.~\cite{Bertschinger23} regarding the gap between ordinary and monotone clearings, proving that $\mathcal{L}^m(G)$ can indeed scale infinitely independent of $\mathcal{L}(G)$. Our results provide a comprehensive pathwidth-based framework for lions and contamination on general graphs.

Several questions remain open. It is natural to seek sharper characterizations of \(\mathcal{L}(G)\) beyond trees, for example on restricted graph classes such as planar graphs or bounded-treewidth graphs, and to better understand which graph-containment notions preserve or approximate the lion number. Another natural direction is to determine stronger lower bounds for the lion number on \(n\times n\) grids, especially in forms that may connect grid geometry with width parameters.



\bibliography{lipics-v2021-sample-article}

\appendix

\section{Connection to Zero-visibility Cops and Robber}\label{sec:appendix_zero}
In this section, we introduce the relationship between the lions and contamination problem and Zero-visibility Cops and Robber from \cite{dereniowski2015zero}.
This game can be viewed, from the perspective of lions and contamination, as a non-simultaneous variant where the cops and robber move sequentially. We call a vertex \emph{dirty} if the vertex might have a robber on it, which is analogous to contamination in our problem.

More formally, if we denote $P_t$ as the set of vertices occupied by cops at the end of the $t$-th turn and if we denote $S_t$ and $R_t$ as the sets of vertices that are dirty immediately after and before the cop's $t$-th turn, respectively, then $S_t=R_t\setminus P_t$, $R_{t+1}=(N(S_t)\cup S_t)\setminus P_t$, and each cop in $P_{t+1}$ moves to an adjacent vertex or stays at their current vertex in $P_t$, satisfying $P_{t+1}\subseteq N(P_t)\cup P_t$. Let $c_0(G)$ denote the zero-visibility cop number, which is the minimum number of cops required to clear the graph $G$.

\begin{lemma}\label{lemma:cop_lower_bound}
$c_0(G) \le \mathcal{L}(G)$
\end{lemma}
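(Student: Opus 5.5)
The plan is a direct simulation argument. The cops follow the lions' strategy exactly, and we show by induction that the dirty set in the cops game is always contained in the contaminated set of the lions game. Take a path $\pi$ that clears $G$ with $k=\mathcal{L}(G)$ lions, with lion positions $L_0,L_1,\dots,L_T$ and $W_T=\emptyset$. Place $k$ cops with $P_t:=L_t$ for every $t$. This is a legal cop strategy: each lion either stays or moves along an edge in one step, so $P_{t+1}\subseteq N(P_t)\cup P_t$. Lions sharing a vertex become cops sharing that vertex, so exactly $k$ cops are used.

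\textbf{Invariant.} I will prove $S_t\subseteq W_t$ for all $t\ge 0$.

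\emph{Base case.} Initially everything is dirty, $R_0=V_G$. Hence $S_0=V_G\setminus P_0=V_G\setminus L_0=W_0$.

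\emph{Inductive step.} Assume $S_t\subseteq W_t$ and let $v\in S_{t+1}$. Since $S_{t+1}=R_{t+1}\setminus P_{t+1}$ and $R_{t+1}=(N(S_t)\cup S_t)\setminus P_t$, we get $v\notin L_t\cup L_{t+1}$. We also get that either $v\in S_t$ or $v$ has a neighbor $w\in S_t$. There are two cases.

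\begin{enumerate}[(a)]
  \item If $v\in S_t$, then $v\in W_t\setminus L_{t+1}\subseteq W_{t+1}$ by rule (i).
  \item If $v$ has a neighbor $w\in S_t\subseteq W_t$, we must rule out that a lion blocks the edge at step $t+1$, i.e.\ that $(v,w)$ or $(w,v)$ lies in $\pi_{t+1}$. Either tuple would mean a lion sits on $v$ at time $t$ or at time $t+1$, contradicting $v\notin L_t\cup L_{t+1}$. So the spreading clause of rule (i) applies, and $v\in W_{t+1}$.
\end{enumerate}

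This gives $S_{t+1}\subseteq W_{t+1}$. The extra removal of $P_t$ in the definition of $R_{t+1}$ only shrinks the dirty set, so it can only help.

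\textbf{Conclusion.} At time $T$ we have $S_T\subseteq W_T=\emptyset$, so the cops have cleared $G$ with $k$ cops, and therefore $c_0(G)\le\mathcal{L}(G)$.

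The only delicate point is aligning the two timing conventions. The cops game is sequential: the robber moves after the cops, against $P_t$, and is then captured by $P_{t+1}$. The lions game is simultaneous, with edge-blocking. I expect the main care to go into checking that the single lion-specific protection, blocking by traversal of the edge $\{v,w\}$, is already forced by $v\notin L_t\cup L_{t+1}$. That check is exactly the argument in case (b) above.
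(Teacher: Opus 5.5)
Your proof is correct and follows the paper's approach: the cops copy the lions' moves ($P_t=L_t$), and one shows $S_t\subseteq W_t$ for all $t$, so the cops clear $G$ whenever the lions do. Your explicit induction makes rigorous what the paper only asserts informally by comparing the two contamination rules, in particular the observation in case (b) that a blocking traversal $(v,w)$ or $(w,v)\in\pi_{t+1}$ would force $v\in L_t\cup L_{t+1}$.
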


\begin{proof}
By the definition of the lions and contamination problem, the contaminated set $W_{t+1}$ consists of all vertices $v \notin L_{t+1}$ such that $v \in W_t \setminus L_t$, or there exists a contaminated neighbor $w \in W_t$ where the edge $\{w, v\}$ is not traversed by a lion at time $t+1$.
On the other hand, under the standard Zero-visibility Cops and Robber rules, the set of dirty vertices $S_{t+1}$ consists of all vertices $v \notin \pi^c_{t+1}$ (where $\pi^c_{t+1}$ is the set of cop positions) such that $v \in S_t \setminus \pi^c_t$, or there is a dirty neighbor $w \in S_t$ that is not occupied by a cop.
Comparing the definitions, the condition for a vertex to become (or remain) \emph{dirty} is equivalent to or strictly looser than the condition to become contaminated, since lions must actively block edges during movement to stop contamination spread, whereas cops simply occupying vertices restrict the robber. Thus, assuming identical movement paths for cops and lions, a dirty vertex under the Cops rules is always contaminated under the Lions rules. More formally, $S_t \subseteq W_t$. Since a valid clearing strategy requires the respective sets to become empty, if a lion strategy clears $W_t = \emptyset$, the identical cop strategy guarantees $S_t = \emptyset$. It follows that $c_0(G) \le \mathcal{L}(G)$.
\end{proof}

\begin{lemma}
$\mathcal{L}(G) \le 2c_0(G)$.
\end{lemma}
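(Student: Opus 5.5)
We plan to simulate an optimal zero-visibility cop strategy with twice as many lions. Fix a cop strategy with $k=c_0(G)$ cops and positions $P_0,P_1,\dots,P_T$ whose dirty set eventually becomes empty. For each cop we use a pair of lions: a ``leader'' and a ``trailer.'' At lion time $t$ the leader stands at the cop's position $p_t$ and the trailer stands at the cop's previous position $p_{t-1}$, with both lions at $p_0$ at time $0$. One step later the leader moves $p_t\to p_{t+1}$ and the trailer moves $p_{t-1}\to p_t$. Both moves are legal because consecutive cop positions are equal or adjacent. Hence $L_t=P_t\cup P_{t-1}$, and the strategy uses $2k$ lions.

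The core of the argument is an inductive invariant $W_t\subseteq S_t$, the reverse of the inclusion used in \cref{lemma:cop_lower_bound}. We will unfold the cop recursion $S_{t+1}=\bigl((N(S_t)\cup S_t)\setminus P_t\bigr)\setminus P_{t+1}$ and compare it with rule (i) of the lion game. Take $v\in W_{t+1}$, so $v\notin L_{t+1}\supseteq P_{t+1}$.
\begin{enumerate}[(a)]
  \item If $v\in W_t\setminus L_{t+1}$, then by induction $v\in S_t$. Because $v$ is contaminated at time $t$, it is not in $L_t\supseteq P_t$. Therefore $v\in S_t\setminus P_t$, and since $v\notin P_{t+1}$, we get $v\in S_{t+1}$.
  \item Otherwise $v$ has a neighbour $w\in W_t\subseteq S_t$, and $w\notin L_t\supseteq P_t$. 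Then $v\in N(S_t)$. To conclude $v\in S_{t+1}$ we also need $v\notin P_t$.
\end{enumerate}
The condition $v\notin P_t$ in case (b) is exactly where the trailer is used: $P_t\subseteq L_{t+1}$, because the trailer occupies the cop's time-$t$ position at lion time $t+1$. So $v\notin L_{t+1}$ gives $v\notin P_t$.

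This is the essential point. The cop rule removes $P_t$ as well as $P_{t+1}$, and the extra stationary-looking lion reproduces that double blocking. Edge-traversal blocking in the lion game only helps, since it can only shrink $W$, so we can ignore it.

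The base case also needs care. Lions are at $P_0$ at time $0$, so $W_0=V_G\setminus P_0$. The cop game starts from $R_0=V_G$, giving $S_0=V_G\setminus P_0$, which matches. If the paper's indexing offsets the first cop turn, we will shift by one step, possibly letting the trailer coincide with the leader initially. This is harmless by \cref{lemma:contamination_subset_lemma}, since extra lions or guarded edges only reduce contamination.

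With the invariant in place, at the step $T$ where $S_T=\emptyset$ we get $W_T=\emptyset$. So $G$ is cleared with $2k$ lions, which gives $\mathcal{L}(G)\le 2c_0(G)$.

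The main obstacle is aligning the two games' time conventions, in particular whether dirt spreads before or after the cops move. The indices of $P$ in the recursion determine whether the trailer must sit on $P_{t-1}$ or on $P_t$. Our plan is to fix the recursion exactly as stated in the appendix and verify the single-step inclusion carefully for both membership cases. If some convention mismatch remains, we will absorb it with a one-step delay of the trailer, using monotonicity of contamination under added lions.
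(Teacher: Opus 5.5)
Your proposal is correct and follows the paper's proof: each cop is doubled into two lions occupying $P_{t-1}$ and $P_t$ at step $t$, so that $L_t=P_{t-1}\cup P_t$, and the invariant $W_t\subseteq S_t$ is checked against the recursion $R_t=(N(S_{t-1})\cup S_{t-1})\setminus P_{t-1}$. The only difference is presentation: you use a direct forward induction with two cases, where the paper argues by contradiction from the first time step at which the inclusion fails.
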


\begin{proof}
Let $\pi^c$ be a cop path that clears the graph $G$ using $c_0(G)$ cops. For each cop $c$ following a trajectory of vertices $(v_0, v_1, v_2, \dots)$ across time steps, we assign exactly two lions and we define their moves at time step $t$ as $(v_{t-2}, v_{t-1})$ and $(v_{t-1}, v_t)$ where $v_{-1}=v_0$. Note that at time step $t$, the two lions assigned to this cop exactly occupy $v_{t-1}$ and $v_t$, which naturally ensures that $P_{t-1}\cup P_{t}=L_t$ holds. Since $P_{t+1}\subseteq N(P_t)\cup P_t$, these are physically valid lion moves because they mimic valid cop movements along adjacent vertices.

Let time step $t$ be the first time step where there exists a vertex $v \in W_t$ under $\pi$ but $v\notin S_t$ under $\pi^c$. By the definition of the problem, $W_0\subseteq S_0$ holds. By definition, $S_t=R_t\setminus P_t$, and since $v\notin P_t$, we require that $v\notin R_t=(N(S_{t-1})\cup S_{t-1})\setminus P_{t-1}$ holds for $v\notin S_t$, which implies $v\in P_{t-1}$ or $v\notin N(S_{t-1})\cup S_{t-1}$. If $v\in P_{t-1}$, then $P_{t-1}\subseteq L_t$ implies that $v\notin W_t$, which is a contradiction. Thus, $v\notin N(S_{t-1})\cup S_{t-1}$. Furthermore, since $t$ is the \emph{first} time step where a vertex is in $W_t$ but not in $S_t$ and $v\notin S_{t-1}$, $v\notin W_{t-1}$ also holds.

For $v$ to be in $W_t$, there must exist a vertex $u$ such that $u\in W_{t-1}\cap N(v)$ (since $v\notin W_{t-1}$, $v\neq u$). Similarly, by $t$ being the \emph{first} time step, $u\in S_{t-1}$ and $u\in N(v)$ also holds. Since $u\in N(v)$ implies that $v\in N(u)$, $v\in N(u)\subseteq N(S_{t-1})$, which contradicts $v\notin N(S_{t-1})\cup S_{t-1}$. Since $v\in W_t$ and $v\notin S_t$ cannot happen at the same time, $W_t\subseteq S_t$ holds for every $t$. Since $\pi^c$ clears $G$, and thus $S_t=\emptyset$ for sufficiently large $t$, $\pi$ can also clear $G$ with $2c_0(G)$ lions.
\end{proof}

\begin{corollary}\label{corollary:cop_tree_bound}
    For a tree $T$ that contains two or more vertices, $\lceil{\frac{\operatorname{pw}(T)}{2}}\rceil \leq c_0(T) \leq \operatorname{pw}(T)$.
\end{corollary}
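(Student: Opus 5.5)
The corollary follows by combining the two comparison lemmas just proved with \cref{theorem:pathwidth_theorem}, which sandwiches $\mathcal{L}(T)$ between $\operatorname{pw}(T)$ and $\operatorname{pw}(T)+1$. Each of the two inequalities needs one extra ingredient.

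For the lower bound, I use $\mathcal{L}(T)\le 2c_0(T)$ together with $\operatorname{pw}(T)\le \mathcal{L}(T)$ from \cref{theorem:pathwidth_theorem}. This gives $\operatorname{pw}(T)\le 2c_0(T)$, so $c_0(T)\ge \operatorname{pw}(T)/2$. Since $c_0(T)$ is an integer, taking ceilings yields $\lceil \operatorname{pw}(T)/2\rceil \le c_0(T)$.

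For the upper bound, \cref{lemma:cop_lower_bound} and \cref{theorem:pathwidth_theorem} only give $c_0(T)\le \mathcal{L}(T)\le \operatorname{pw}(T)+1$, which is one too large. I need an extra argument, and this is where the hypothesis $|V_T|\ge 2$ enters.

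I would first try to mirror \cref{theorem:upper_bound_on_general_graph_theorem} directly in the cops game, where guarding is weaker than in the lions game. Take the decomposition of \cref{lemma:proper_bag_lemma}, which applies because a tree with at least two vertices has $\operatorname{pw}(T)\le 1\le |V_T|-2$ whenever $|V_T|\ge 3$; the case $|V_T|=2$ is checked by hand, since a single cop clears $K_2$ and $\operatorname{pw}=1$. In that decomposition every separator $\mathcal{B}_i\cap\mathcal{B}_{i+1}$ has at most $\operatorname{pw}(T)$ vertices, so I want $\operatorname{pw}(T)$ cops to hold the separator while advancing to the new bag. The danger is that with no spare cop, vacating a separator vertex lets the robber region reach $U_i$. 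I would exploit two facts: in the cops game a dirty vertex spreads only after cops have moved ($R_{t+1}=(N(S_t)\cup S_t)\setminus P_t$), and in a tree each separator vertex can be swept along a path toward the next bag. The plan is to advance one cop at a time along the tree path from a separator vertex that leaves the bag to a new vertex. This works when the vertex being left has all of its remaining neighbours on the cleared side, or when the cop steps onto the very neighbour that borders the dirty part, which tree structure should make arrangeable.

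If that sweep becomes messy, the fallback is induction on \cref{lemma:pathwidth_lemma}, the characterization behind \cref{lemma:lion_lower_bound_lemma}, carried out directly for cops. Suppose $T-v$ has at most two components of pathwidth $\operatorname{pw}(T)$. Park one cop at $v$ and clear the low-pathwidth branches inductively with $\operatorname{pw}(T)-1$ cops. Then walk along the spine toward the two heavy branches, exactly as in the forward direction of \cref{lemma:lion_lower_bound_lemma}. The base case is pathwidth $1$, namely caterpillars, which one cop clears by walking the spine. The induction loses nothing because the parked cop counts toward the budget of $\operatorname{pw}(T)$.

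I expect the main obstacle to be verifying that the recursive procedure never recontaminates a cleared branch when the stationary cop leaves $v$ to follow the heavy branch. I would handle it by reordering so the cop leaves $v$ only after every other branch at $v$ is clean, as in the forward-direction argument.
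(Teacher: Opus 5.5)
Your lower bound is exactly the paper's argument: $\operatorname{pw}(T)\le\mathcal{L}(T)\le 2c_0(T)$, then integrality. For the upper bound the paper builds no strategy. It quotes Theorem~3.2 of Dereniowski et al., $c_0(G)\le\operatorname{pw}(G)$ for every connected graph with at least two vertices, and that is also where the hypothesis $|V_T|\ge 2$ comes from.

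Your fallback is a genuinely different, self-contained route for trees, and it can be made rigorous once you organize it as a sweep along a spine. From \cref{lemma:pathwidth_lemma}, following heavy components as in the forward direction of \cref{lemma:lion_lower_bound_lemma}, a tree with $\operatorname{pw}(T)=k$ has a path $P$ such that every component of $T-V(P)$ has pathwidth at most $k-1$. One cop walks $P$ from one end to the other. At each spine vertex the other $k-1$ cops clear the hanging components one at a time: inductively, or trivially for single vertices since $k-1\ge 1$ when $k\ge 2$. The base case is caterpillars, cleared by one cop that walks the spine and steps into each leaf and back.

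You should make two points explicit.
\begin{itemize}
\item An isolated strategy for a hanging component still works after the cops walk in through dirty vertices. This holds because the occupied spine vertex is never dirty and the update $S_{t+1}=((N(S_t)\cup S_t)\setminus P_t)\setminus P_{t+1}$ is monotone in $S_t$.
\item The sweep must start at an end of $P$, not by parking a cop at $v$ first. With two heavy branches at $v$, the cop entering one of them lets $v$, and then the light branches you already cleared, be re-dirtied from the other branch. Your closing reordering remark is the right fix.
\end{itemize}
The citation buys the bound for all connected graphs with no game analysis. Your route is elementary and reuses the paper's own Takahashi machinery, but it covers only trees.

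Drop your first approach entirely. The claim $\operatorname{pw}(T)\le 1$ is false for trees in general; you only need $\operatorname{pw}(T)\le|V_T|-2$, which holds for every non-complete graph. Also, getting a cop onto $\mathcal{B}_{i+1}\setminus\mathcal{B}_i$ with no spare cop and without exposing $U_i$ is exactly where the lions argument breaks. Saying that ``tree structure should make arrangeable'' does not supply that step.
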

\begin{proof}
Since $\mathcal{L}(T) \le 2c_0(T)$ and $\operatorname{pw}(T)\le \mathcal{L}(T)$ by \cref{theorem:pathwidth_theorem}, we get $\lceil{\frac{\operatorname{pw}(T)}{2}}\rceil \leq c_0(T)$.
For the upper bound, we refer to Theorem 3.2 from \cite{dereniowski2015zero}:
\begin{quote}
Theorem 3.2 $c_0(G) \leq \operatorname{pw}(G)$ for any connected graph containing two or more vertices.
\end{quote}
Combining these two inequalities, we obtain $\lceil{\frac{\operatorname{pw}(T)}{2}}\rceil \leq c_0(T) \leq \operatorname{pw}(T)$.
\end{proof}
Furthermore, by applying the graph construction from \cref{theorem:subgraph_counterexample}, we obtain another important consequence regarding the zero-visibility cop number.
\begin{corollary}
    The zero-visibility cop number $c_0(G)$ is not monotonic with respect to subgraph inclusion.
\end{corollary}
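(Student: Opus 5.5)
The plan is to reuse the family $T_i \subseteq G_i$ from the proof of \cref{theorem:subgraph_counterexample} and transfer both of its bounds to the zero-visibility game through the two comparison lemmas of this appendix.

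\textbf{Step 1: $c_0(G_i)$ is bounded.} From \cref{lemma:cop_lower_bound}, $c_0(G_i) \le \mathcal{L}(G_i)$. The proof of \cref{theorem:subgraph_counterexample} gives $\mathcal{L}(G_i) \le 3$ for every $i \ge 1$. Hence $c_0(G_i) \le 3$ for all $i$.

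\textbf{Step 2: $c_0(T_i)$ is unbounded.} Each $T_i$ is a tree with at least two vertices. As noted in the proof of \cref{theorem:subgraph_counterexample}, $T_i$ contains a subdivision of a complete binary tree of height $i$, so $\operatorname{pw}(T_i) \ge \lceil i/2 \rceil$. Applying the lower bound of \cref{corollary:cop_tree_bound},
\[
c_0(T_i) \ge \left\lceil \frac{\operatorname{pw}(T_i)}{2} \right\rceil \ge \left\lceil \frac{\lceil i/2 \rceil}{2} \right\rceil \ge \frac{i}{4}.
\]
Alternatively, combine $\mathcal{L}(T_i) \le 2c_0(T_i)$ with $\mathcal{L}(T_i) \ge \operatorname{pw}(T_i)$ from \cref{theorem:pathwidth_theorem}, which gives the same bound.

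\textbf{Step 3: conclude.} Choose any $i > 12$, so that $c_0(T_i) \ge 4 > 3 \ge c_0(G_i)$. Since $T_i$ is a subgraph of $G_i$, the zero-visibility cop number strictly decreases when passing from $T_i$ to its supergraph $G_i$. This disproves monotonicity under subgraph inclusion.

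\textbf{Main obstacle.} The only delicate point is the pathwidth lower bound for $T_i$. Pathwidth is minor-monotone, so a subdivision of the complete binary tree of height $i$ has pathwidth at least $\lceil i/2 \rceil$ by \cite{bodlaender1998partial}, and so does $T_i$, which contains it. The lower bound degrades by a factor of $4$ in $i$, but it is still unbounded in $i$, and that is all the argument needs.
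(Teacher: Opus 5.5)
Your proposal is correct and follows the paper's proof essentially step for step: $c_0(G_i)\le\mathcal{L}(G_i)\le 3$ by \cref{lemma:cop_lower_bound}, $c_0(T_i)\ge\lceil\lceil i/2\rceil/2\rceil$ by \cref{corollary:cop_tree_bound}, and taking $i\ge 13$ gives $c_0(T_i)\ge 4>3\ge c_0(G_i)$. Your remark that pathwidth is minor-monotone, so the subdivided binary tree inherits the bound of \cite{bodlaender1998partial}, fills in a step the paper states without justification.
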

\begin{proof}
    Consider the graphs $T_i$ and $G_i$ constructed in the proof of \cref{theorem:subgraph_counterexample}. We know $\mathcal{L}(G_i)\leq3$ for any $i$.
    By \cref{lemma:cop_lower_bound}, $c_0(G_i) \le \mathcal{L}(G_i)$, which implies $c_0(G_i) \le 3$.
    Furthermore, the lower bound $\lceil{\frac{\operatorname{pw}(T_i)}{2}}\rceil \leq c_0(T_i)$ from \cref{corollary:cop_tree_bound} ensures that $c_0(T_i) \ge \lceil \frac{1}{2} \lceil \frac{i}{2} \rceil \rceil$.
    For sufficiently large $i$ (e.g., $i \ge 13$), we get $c_0(T_i) \ge 4 > 3 \ge c_0(G_i)$.
    Since $T_i$ is a subgraph of $G_i$, this definitively demonstrates that a subgraph might require strictly more zero-visibility cops than its supergraph.
\end{proof}

\section{Proof of Near-Tightness on the bound in \cref{theorem:monotonous_upper_bound_on_general_graph_theorem}}\label{sec:appendixproof}
In this section, we provide the proof that the bound given in \cref{theorem:monotonous_upper_bound_on_general_graph_theorem} is tight up to a constant term by showing that $\mathcal{L}^m(T_h) = h$ for a complete binary tree $T_h$ of height $h$ (i.e., a complete binary tree with $2^{h+1}-1$ vertices) for every integer $h \ge 1$.

\begin{lemma}
    For a complete binary tree $T_h$ with $2^{h+1}-1$ vertices, $\mathcal{L}^m(T_h)=h$ for every integer $h \ge 1$.
\end{lemma}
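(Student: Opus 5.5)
The lemma splits into an upper bound $\mathcal{L}^m(T_h)\le h$, which needs an explicit monotone strategy, and a lower bound $\mathcal{L}^m(T_h)\ge h$, which needs an inductive counting argument. \cref{theorem:monotonous_lower_bound_on_general_graph_theorem} only gives $\mathcal{L}^m(T_h)\ge \operatorname{pw}(T_h)=\lceil h/2\rceil$, which is too weak, so both directions will be argued directly on the recursive structure of $T_h$: a root $r$ with children $c_L,c_R$, each rooting a copy of $T_{h-1}$.

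\textbf{Upper bound.} I would first prove an auxiliary claim $G(k)$. Suppose all lions start on the root of a copy of $T_k$, and that root's only outside neighbour (if any) is cleared and either occupied or with no contaminated neighbours. Then $k+1$ lions clear this copy monotonically.

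For $k=0$ one lion suffices. For $k\ge1$ the strategy is as follows. One lion stays on the root. The other $k$ lions step to the left child and clear the left copy of $T_{k-1}$ by $G(k-1)$. All $k+1$ lions then walk back through cleared territory. Finally all of them step simultaneously from the root into the right child. This vacates the root safely, because the root's only contaminated neighbour is the edge being traversed, and it leaves $k+1\ge k$ lions to apply $G(k-1)$ on the right.

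The main strategy for $T_h$ with $h$ lions uses a root-to-leaf spine $v_0=r,\dots,v_h$ and starts with all $h$ lions on the leaf $v_h$. The lions climb the spine. At $v_d$ with $d\ge1$, one lion guards $v_d$ and the remaining lions clear the sibling subtree hanging off $v_d$, a copy of $T_{h-d-1}$. By $G(h-d-1)$ this costs $h-d$ lions, so $h-d+1\le h$ lions in total; afterwards everyone walks back to $v_d$ and steps up to $v_{d-1}$. At $r$ the whole left side is clear, so all $h$ lions step together into the other child of $r$ and clear that copy of $T_{h-1}$ by $G(h-1)$, which needs exactly $h$ lions. Monotonicity is checked by \cref{lemma:boundary_keeping_lemma}-style bookkeeping: at every step, the only cleared vertices with contaminated neighbours are occupied, or are being vacated along the very edge into their contaminated neighbour.

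\textbf{Lower bound.} I would induct on $h$; $T_1$ trivially needs one lion. For $h\ge2$ I would strengthen the hypothesis to the following statement $P(k)$. In every monotone clearing of any graph containing $T_k$ attached through a single cut vertex, there is a time at which at least $k$ lions lie inside that copy of $T_k$ while the copy is \emph{partially} cleared, meaning it contains both cleared and contaminated vertices. To propagate this, I would adapt \cref{lemma:subtree_on-graph_lion_lemma} to the monotone setting. Applying remote clear outside the copy and collapsing exiting lions onto the attachment vertex keeps the clearing monotone, because it only shrinks each $W_t$ and leaves the inclusions $W_t\subseteq W_{t-1}$ intact.

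Now suppose $T_h$ is cleared monotonically by $h-1$ lions. Let $t_L$ and $t_R$ be the moments given by $P(h-1)$ for the subtrees at $c_L$ and $c_R$, and assume $t_L<t_R$. At $t_L$ all lions are in the left subtree, so by \cref{lemma:boundary_keeping_lemma} the right subtree and $r$ contain no unguarded boundary vertex. Hence the right subtree is entirely cleared or entirely contaminated, and likewise for $r$.

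\begin{itemize}
\item If the right subtree is entirely cleared at $t_L$, monotonicity contradicts its being partially cleared at $t_R>t_L$.
\item Otherwise the right subtree is entirely contaminated at $t_L$, so $r$ is contaminated, since a cleared $r$ would be an unguarded boundary vertex.
\item At $t_R$ all lions are in the right subtree. Since the left subtree was already partially cleared, monotonicity makes it entirely cleared. Then $c_L$ is a cleared vertex, and $r$ must be cleared, since otherwise $c_L$ is an unguarded boundary vertex.
\item But $r$ must also be contaminated, since otherwise $r$, adjacent to the still partially contaminated right subtree, is unguarded. The only exception is that the contaminated part of the right subtree avoids $c_R$.
\end{itemize}

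\textbf{Main obstacle.} I expect the hardest step to be closing exactly this last case, where $c_R$ is already cleared at $t_R$. I would handle it by choosing $t_R$ more carefully, namely as the \emph{first} time the $P(h-1)$ configuration occurs in the right subtree, and proving in the strengthened statement $P(k)$ that at this first moment the root of the copy is still contaminated. If that fails, the fallback is to count lions via the bag construction of \cref{theorem:monotonous_lower_bound_on_general_graph_theorem} restricted to the spine.
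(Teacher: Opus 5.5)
Your upper bound is correct. It properly handles the point that the last subtree must be entered through its root by all $h$ lions. Your lower-bound skeleton is also the same as the paper's: times $t_L<t_R$, all $h-1$ lions inside $T_L$ at $t_L$, and $T_R\cup\{r\}$ uniform by \cref{lemma:boundary_keeping_lemma}. The gap is in the final case, and the repair you propose cannot work.

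In that case $T_R\cup\{r\}$ is entirely contaminated and lion-free at $t_L$. The only edge into $T_R$ is $\{r,c_R\}$, so every lion inside $T_R$ at any later time passed through $c_R$ and cleared it. By monotonicity $c_R$ then stays cleared. Hence $c_R$ is cleared at $t_R$, whether or not $t_R$ is chosen as a first occurrence. So the claim that the root of the copy is still contaminated at the first $P(h-1)$-moment is false exactly where you need it. Moreover, the configuration at $t_R$ violates nothing in \cref{lemma:boundary_keeping_lemma}: all $h-1$ lions are in $T_R$, the vertices of $T_L$ together with $r$ and $c_R$ are cleared, and the rest of $T_R$ is partly contaminated. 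The contradiction therefore has to come from counting lions. $P(h-1)$ cannot supply it, since it only promises the $h-1$ lions you already have. The spine fallback does not help either. The bag construction of \cref{theorem:monotonous_lower_bound_on_general_graph_theorem} only yields path decompositions, so it cannot give more than $\operatorname{pw}(T_h)=\lceil h/2\rceil$.

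The missing idea is that entering a subtree through its root costs one extra lion. This is exactly what the paper invokes at this point, via its restricted root-starting claim that $h+1$ lions are needed. One precise form is the following.

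\emph{Statement $R(j)$:} let $S$ be a copy of $T_j$ whose root $c$ is its only vertex with outside neighbours. If $S$ is entirely contaminated and lion-free at a time $s_0$, then at some $s>s_0$ it contains at least $j+1$ lions and $S\not\subseteq C_{s-1}$.

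\emph{Proof sketch:} $R(0)$ is witnessed by the first visit to $c$ after $s_0$. For $j\ge 1$, let $s_A,s_B$ be the times given by $R(j-1)$ for the two subtrees $A,B$. If some lion lies in $B\cup\{c\}$ at $s_A$, or in $A\cup\{c\}$ at $s_B$, you are done. Otherwise $s_A\neq s_B$; say $s_A<s_B$. Some lion reached $A$ through $c$ after $s_0$, so $c\in C_{s_A}$. Since $B\cup\{c\}$ is connected and lion-free at $s_A$, \cref{lemma:boundary_keeping_lemma} forces $B\subseteq C_{s_A}\subseteq C_{s_B-1}$, a contradiction.

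With $R(h-1)$ in hand, your last case closes at once. $T_R$ is entirely contaminated and lion-free at $t_L$, so it later holds $h$ lions, contradicting the assumption of $h-1$ lions.

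Separately, the fact that the modification only shrinks each $W_t$ does not by itself preserve $W_t\subseteq W_{t-1}$. You can avoid the collapsing argument altogether: the same case analysis, counting only lions inside the copy, runs directly in any host graph.
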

\begin{proof}
First, we establish a bound for a restricted scenario: let $\pi$ be a monotone clearing path for $T_h$ where all lions initially start at the root $r$. We claim that $\mathcal{L}_\pi(T_h) \ge h+1$.
We prove this by induction on $h$. For the base case $h=1$, it is trivial that $\mathcal{L}_\pi(T_1) \ge 2 = 1+1$.
Assume the claim holds for $T_{h-1}$. When clearing $T_h$ monotonically via $\pi$ starting from $r$, the lions must eventually clear its two maximal subtrees, both isomorphic to $T_{h-1}$. By \cref{lemma:subtree_on-graph_lion_lemma}, clearing one such subtree $T'$ requires the lions to fully enter $T'$ from $r$. By the inductive hypothesis, clearing $T'$ internally under this restriction requires at least $h$ lions. Meanwhile, to maintain monotonicity and prevent contamination from the other uncleared subtree from spreading through $r$ into $T'$, at least one additional lion must guard the root $r$ by \cref{lemma:boundary_keeping_lemma}. Therefore, $\mathcal{L}_\pi(T_h) \ge h+1$.

Now, we prove the main claim $\mathcal{L}^m(T_h)= h$ by induction on $h$.
For the base cases $h=1$ and $h=2$, it is easy to verify that $\mathcal{L}^m(T_1)=1$ and $\mathcal{L}^m(T_2)=2$.
Assume as our inductive hypothesis that $\mathcal{L}^m(T_k)=k$ for some integer $k \ge 2$.
We will show that $\mathcal{L}^m(T_{k+1}) = k+1$.
Assume for contradiction that $\mathcal{L}^m(T_{k+1}) \le k$, and let $\pi^*$ be a monotone path clearing $T_{k+1}$ with at most $k$ lions.
Let $r$ be the root of $T_{k+1}$, and let $T_L$ and $T_R$ be its two maximal subtrees.
By the inductive hypothesis, $\mathcal{L}^m(T_L) = \mathcal{L}^m(T_R) = k$.
By applying the remote clear logic from \cref{lemma:subtree_on-graph_lion_lemma} to monotone paths, any monotone clearing must place at least $\mathcal{L}^m(T_L) = k$ lions entirely within $T_L$ at some time step $t_L$. Since we assumed $\pi^*$ uses at most $k$ lions, all available lions must be located entirely within $T_L$ at time $t_L$. Similarly, there exists a time step $t_R$ for $T_R$. Assume without loss of generality that $t_L < t_R$.
At time $t_L$, no lions are present in the connected subgraph $T_R \cup \{r\}$. 
By \cref{lemma:boundary_keeping_lemma}, the absence of lions and $t_L<t_R$ implies that $T_R\cup \{r\}$ must be fully contaminated at time $t_L$.
Subsequently clearing $T_R$ monotonically from this state requires the lions to enter via $r$. This exactly reduces to the restricted root-starting scenario for a tree of height $k$, demanding at least $k+1$ lions. This contradicts our assumption that $\pi^*$ uses at most $k$ lions, so $\mathcal{L}^m(T_{k+1}) \ge k+1$.
Finally, stationing one lion at $r$ while using the remaining $k$ lions to sequentially clear $T_L$ and $T_R$ is a valid monotone strategy, yielding $\mathcal{L}^m(T_{k+1}) \le k+1$.
Thus, $\mathcal{L}^m(T_{k+1}) = k+1$.
\end{proof}
With this lemma established, we can now demonstrate the near-tightness of the upper bound given in \cref{theorem:monotonous_upper_bound_on_general_graph_theorem}. 
It is a known result that the pathwidth of a complete binary tree $T_h$ of height $h$ is $\operatorname{pw}(T_h) = \lceil \frac{h}{2} \rceil$ \cite{bodlaender1998partial}. 
Since we proved $\mathcal{L}^m(T_h) = h$, we can express the monotone lion number in terms of pathwidth as $2\operatorname{pw}(T_h) - 1 \le \mathcal{L}^m(T_h) \le 2\operatorname{pw}(T_h)$. 
Recall that the general upper bound from \cref{theorem:monotonous_upper_bound_on_general_graph_theorem} states $\mathcal{L}^m(G) \le 2\operatorname{pw}(G) + 2$ for any connected graph $G$. 
Comparing these results, the minimum number of lions required for $T_h$ matches the general upper bound up to a difference of at most $3$. 
This confirms that the multiplicative factor of $2$ in our general upper bound is essential and cannot be improved, establishing its near-tightness.
\end{document}